\theoremstyle{plain}
\newtheorem{theorem}{Theorem}[section]
\newtheorem{lemma}[theorem]{Lemma}
\newtheorem{proposition}[theorem]{Proposition}
\newtheorem{theorem-definition}[theorem]{Theorem-Definition}
\theoremstyle{definition}
\newtheorem{definition}[theorem]{Definition}
\newtheorem{notation}[theorem]{Notation}
\newtheorem{convention}[theorem]{Convention}
\theoremstyle{remark}
\numberwithin{equation}{section}
\newcommand{\N}{{\mathds{N}}}
\newcommand{\Q}{{\mathds{Q}}}
\newcommand{\R}{{\mathds{R}}}
\newcommand{\C}{{\mathds{C}}}
\newcommand{\U}{{\mathcal{U}}}
\newcommand{\D}{{\mathfrak{D}}}
\newcommand{\A}{{\mathfrak{A}}}
\newcommand{\B}{{\mathfrak{B}}}
\newcommand{\M}{{\mathfrak{M}}}
\newcommand{\bigslant}[2]{{\raisebox{.2em}{$#1$}\left/\raisebox{-.2em}{$#2$}\right.}}
\newcommand{\Lip}{{\mathsf{L}}}
\newcommand{\TLip}{{\mathsf{T}}}
\newcommand{\dpropinquity}[1]{{\mathsf{\Lambda}^\ast_{#1}}}
\newcommand{\Kantorovich}[1]{{\mathsf{mk}_{#1}}}
\newcommand{\Haus}[1]{{\mathsf{Haus}_{#1}}}
\newcommand{\StateSpace}{{\mathscr{S}}}
\newcommand{\qcms}{quantum compact metric space}
\newcommand{\unit}{1}
\newcommand{\sa}[1]{{\mathfrak{sa}\left({#1}\right)}}
\newcommand{\dom}[1]{{\operatorname*{dom}({#1})}}
\newcommand{\tunnelset}[3]{{\text{\calligra Tunnels}\,\left[\left({#1}\right)\stackrel{#3}{\longrightarrow}\left({#2}\right)\right]}}
\newcommand{\tunnelextent}[1]{{\chi\left({#1}\right)}}
\newcommand{\alg}[1]{{\mathfrak{#1}}}
\newcommand{\BaireSpace}{{\mathscr{N}}}
\newcommand{\norm}[2]{\left\|{#1}\right\|_{#2}}
\newcommand{\diag}{\mathrm{diag}}
\renewcommand{\geq}{\geqslant}
\renewcommand{\leq}{\leqslant}
\newcommand{\vast}{\bBigg@{4}}
\newcommand{\Vast}{\bBigg@{5}}
\begin{document}

\title[strongly Leibniz Gromov--Hausdorff propinquity]{The strongly Leibniz property and the   Gromov--Hausdorff propinquity}
\author{Konrad Aguilar}
\address{Department of Mathematics and Statistics, Pomona College, 610 N. College Ave., Claremont, CA 91711} 
\email{konrad.aguilar@pomona.edu}
\urladdr{\url{https://aguilar.sites.pomona.edu}}
\thanks{The first author gratefully acknowledges the financial support  from the Independent Research Fund Denmark  through the project `Classical and Quantum Distances' (grant no.~9040-00107B)}

\author[S.R.~Garcia]{Stephan Ramon Garcia}
\address{Department of Mathematics and Statistics, Pomona College, 610 N. College Ave., Claremont, CA 91711} 
\email{stephan.garcia@pomona.edu}
\urladdr{\url{https://pages.pomona.edu/~sg064747}}
\thanks{The second author is partially supported by NSF grants DMS-1800123 and DMS-2054002}

\author{Elena Kim}
\address{Department of Mathematics, Massachusetts Institute of Technology, 77 Massachusetts Ave, Cambridge, MA 02139}
\email{elenakim@mit.edu}
\urladdr{}
\thanks{The third author is supported by the National Science Foundation Graduate Research Fellowship under Grant No. 1745302}

\author{Fr\'ed\'eric Latr\'emoli\`ere}
\address{Department of Mathematics, University of Denver, Denver, CO 80208}
\email{frederic@math.du.edu}
\urladdr{\url{https://math.du.edu/~frederic/}}

\date{\today}
\subjclass[2000]{Primary:  46L89, 46L30, 58B34.}
\keywords{ Gromov--Hausdorff propinquity, quantum metric spaces, Effros--Shen algebras}

\begin{abstract}
  We construct a new version of the dual Gromov--Hausdorff propinquity that is sensitive to the strongly Leibniz property. In particular, this new distance is   complete on the class of strongly Leibniz quantum compact  metric spaces. Then, given an inductive limit of C*-algebras for which each C*-algebra of the inductive limit is equipped with a strongly Leibniz L-seminorm, we provide sufficient conditions for placing a strongly Leibniz L-seminorm on an inductive limit  such that the inductive sequence converges to the inductive limit in this new Gromov--Hausdorff propinquity. As an application, we place new strongly Leibniz L-seminorms on AF-algebras using Frobenius--Rieffel norms, for which we have convergence of the Effros--Shen algebras in the Gromov--Hausdorff propinquity with respect to their irrational parameter.
\end{abstract}
\maketitle

\section{Introduction and Background}

What is the analogue of a Lipschitz seminorm in noncommutative geometry? Interest in this question stems from the observation that the Lipschitz seminorm defined by the metric of a compact metric space encodes the underlying metric at the level of its C*-algebra of $\C$-valued continuous functions. Thus, an appropriate notion of a noncommutative Lipschitz seminorm is the core ingredient in a noncommutative metric space. A good definition of a noncommutative Lipschitz seminorm should be flexible enough to include many interesting examples and strong enough to enable the development of an interesting theory. For our purposes, an interesting theory is a theory of convergence which extends the Gromov--Hausdorff distance \cite{Gromov} between compact metric spaces to noncommutative geometry.

The origin of the theory of {\qcms s} is found in Connes' work on spectral triples, which provide noncommutative generalizations of Riemannian manifolds \cite{Connes,Connes89}. Rieffel then addressed the question of defining noncommutative analogues of metric spaces without reference to a differential structure \cite{Rieffel98a,Rieffel99}, designed to be   flexible  while still enabling him to define the quantum Gromov--Hausdorff distance between such spaces \cite{Rieffel00}. Quantum compact metric spaces were  given by ordered pairs $(\A,\Lip)$ of an order unit space $\A$ and a seminorm $\Lip$ that induces a metric on the state space $\StateSpace(\A)$ of $\A$ that metrizes the weak* topology, called the Monge--Kantoroich metric, denoted $\Kantorovich{\Lip}$. The metric  $\Kantorovich{\Lip}$ generalizes the Monge--Kantorovich distance \cite{Kantorovich40}, built from the usual Lipschitz seminorm, and Connes distance \cite{Connes89}, built from spectral triples.

The study of Rieffel's quantum Gromov--Hausdorff distance leads  to  two natural questions. One concerns the coincidence property. If $\A$ and $\B$ are two unital C*-algebras such that $(\mathfrak{sa}(\A),\Lip_\A)$ and $(\mathfrak{sa}(\B),\Lip_\B)$, where $\mathfrak{sa}(\A)$ is the order unit space of self-adjoint elements of $\A$, are Rieffel's {\qcms s} which are  at distance zero with respect to Rieffel's distance, then it is not clear that $\A$ and $\B$ are   *-isomorphic. Attempts at strengthening Rieffel's construction to get this desirable property typically involve restricting the notion of a quantum compact metric spaces, for example, to a class of operator systems \cite{Kerr02} or to C*-algebras \cite{Latremoliere13}. The other question is whether one can extend Rieffel's distance to encompass more structures than the quantum metrics. For example, when working with {\qcms s} of the form $(\mathfrak{sa}(\A),\Lip)$ with $\A$ a unital C*-algebra, we are interested in convergence for modules, group actions, and other such higher structures over the underlying C*-algebras. The two matters are related: they   require   an analogue of the Gromov--Hausdorff distance which is well adapted to C*-algebras. For instance, Rieffel's work on convergence of modules \cite{Rieffel06,Rieffel08,Rieffel10,Rieffel10c,Rieffel15} highlighted that a relationship between L-seminorms and the multiplicative structure of the underlying C*-algebra of a {\qcms} was desirable. Rieffel introduced the notion of a \emph{strongly Leibniz} L-seminorm to this end. However, the quantum Gromov--Hausdorff distance is not well adapted to working with such seminorms (see the discussion of the proximity in \cite{Rieffel10c}).

A possible answer to the search for a noncommutative analogue of the Gromov--Hausdorff distance adapted to C*-algebras and Leibniz seminorms was proposed by the fourth author, with the introduction of the propinquity \cite{Latremoliere13,Latremoliere13c,Latremoliere14,Latremoliere15} on a class of {\qcms s} constructed out of C*-algebras and L-seminorms satisfying some form of Leibniz inequality. Moreover, the dual Gromov--Hausdorff propinquity \cite{Latremoliere13c} is complete on the class of Leibniz quantum compact metric spaces. However, the {\em strongly Leibniz} property has not been addressed. Thus, in Section \ref{s:s-leibniz-complete}, we introduce a form of the dual Gromov--Hausdorff propinquity on the class of {\em strongly Leibniz} quantum compact metric spaces and show that it is complete on this class. In Section \ref{s:af}, we apply this to construct strongly Leibniz L-seminorms on certain  inductive limits of C*-algebras for which the inductive sequence converges to the inductive limit in the Gromov--Hausdorff propinquity. To do this, we assume there exist strongly Leibniz L-seminorms on the C*-algebras of the inductive sequence that satisfy  natural assumptions,  as done in \cite{Aguilar18}. Moreover, we find strongly Leibniz L-seminorms on all unital AF-algebras equipped with faithful tracial state using Frobenius--Rieffel norms \cite{Rieffel74, Aguilar-Garcia-Kim21} following a suggestion from Rieffel. These new seminorms  still preserve the convergence results of \cite{Aguilar-Latremoliere15}, including the convergence of Effros--Shen algebras with respect to their irrational parameters.

We begin with the following notion of {\qcms s}.

\begin{notation}
  We denote the norm of a normed vector space $E$ by $\norm{\cdot}{E}$.
\end{notation}

\begin{definition}\label{qcms-def} For $(A,B)\in [1,\infty)\times [0,\infty)$, 
  an {\em {$(A,B)$-\qcms} }$(\A,\Lip)$    is   a unital C*-algebra $\A$ and a seminorm $\Lip$ defined on a dense Jordan-Lie subalgebra $\dom{\Lip}$ of $\mathfrak{sa}(\A)$ that satisfies the following.
  \begin{enumerate}
  \item $\{ a \in \dom{\Lip} : \Lip(a) = 0 \} = \R \unit_\A$.
  \item The Monge-Kantorovich distance $\Kantorovich{\Lip}$ defined on the state space $\StateSpace(\A)$ of $\A$ by:
    \begin{equation*}
      \forall\varphi,\psi \in \StateSpace(\A) \quad \Kantorovich{\Lip}(\varphi,\psi) = \sup\left\{ \left|\varphi(a) - \psi(a)\right| : a\in\dom{\Lip},\Lip(a) \leq 1 \right\}
    \end{equation*}
    metrizes the weak* topology of $\StateSpace(\A)$.
  \item For all $a,b \in \dom{\Lip}$, 
    \begin{equation*}
      \max\left\{ \Lip\left(\frac{ab+ba}{2}\right), \Lip\left(\frac{ab-ba}{2i}\right) \right\} \leq A\left(\Lip(a)\norm{b}{\A} + \norm{a}{\A}\Lip(b)\right) + B \Lip(a) \Lip(b) \text.
    \end{equation*}
  \item $\{ a \in \dom{\Lip} : \Lip(a) \leq 1 \}$ is closed in $\A$.
  \end{enumerate}
  We call $\Lip$ an $(A,B)$-L-seminorm or  an L-seminorm when context is clear.
\end{definition}

To ease our notation,   we adopt the following convention.

\begin{convention}\label{std-convention}
  We fix a class of $(A,B)$-quasi-Leibniz {\qcms s} for some $A\geq 1$ and $B\geq 0$.  All {\qcms s} belong to this class without further mention.
\end{convention}

The class of {\qcms s} can be turned into the objects of a category \cite{Latremoliere16}; for our purpose, quantum isometries, defined below, will provide us with an adequate notion of morphisms.

\begin{definition}
  A \emph{quantum isometry} $\pi : (\A,\Lip_\A)\rightarrow (\B,\Lip_\B)$ between two {\qcms s} $(\A,\Lip_\A)$ and $(\B,\Lip_\B)$ is a surjective *-morphism $\pi : \A\rightarrow\B$ such that $\pi(\dom{\Lip_\A})\subseteq\dom{\Lip_\B}$ and
  \begin{equation*}
    \forall b \in \dom{\Lip_\B} \quad \Lip_\B(b) = \inf\left\{ \Lip_\A(a) : a\in\dom{\Lip_\A}, \pi(a) = b \right\} \text.
  \end{equation*}

  A \emph{full quantum isometry} $\pi:(\A,\Lip_\A)\rightarrow(\B,\Lip_\B)$ is a quantum isometry such that $\pi$ is a *-isomorphism  and $\pi^{-1}$ is a quantum isometry as well.
\end{definition}

The propinquity is   a metric, up to full quantum isometry, between {\qcms s}, defined as follows.
\begin{definition}
  Let $(\A_1,\Lip_1)$ and $(\A_2,\Lip_2)$ be   {\qcms s}. A \emph{tunnel} $\tau = (\D,\Lip_\D,\pi_1,\pi_2)$ from $(\A_1,\Lip_1)$ to $(\A_2,\Lip_2)$ is  a {\qcms} $(\D,\Lip_\D)$ and quantum isometries $\pi_j : (\D,\Lip_\D) \rightarrow (\A_j,\Lip_j)$ for $j=1,2$.

  The \emph{extent} $\tunnelextent{\tau}$ of the tunnel $\tau$ is defined as
  \begin{equation*}
    \tunnelextent{\tau} = \max_{j\in\{1,2\}} \Haus{\Kantorovich{\Lip_\D}}(\StateSpace(\D),\{ \varphi\circ\pi_j : \varphi\in\StateSpace(\A_j) \} ) \text.
  \end{equation*}
\end{definition}

\begin{notation}
  Let $(\A,\Lip_\A)$ and $(\B,\Lip_\B)$ be two {\qcms s}. The class of all tunnels from $(\A,\Lip_\A)$ to $(\B,\Lip_\B)$ is denoted by
  \begin{equation*}
    \tunnelset{\A,\Lip_\A}{\B,\Lip_\B}{}{} \text.
  \end{equation*}
  We emphasize that, by Convention (\ref{std-convention}), if \[\tau = (\D,\Lip_\D,\ldots) \in \tunnelset{\A,\Lip_\A}{\B,\Lip_\B}{}\] then $(\D,\Lip_\D)$ is an $(A,B)$-{\qcms} where $A$ and $B$ are fixed throughout the construction of the propinquity. 
\end{notation}

\begin{definition}
  The {\em propinquity} between two {\qcms s} $(\A,\Lip_\A)$ and $(\B,\Lip_\B)$ is:
  \begin{equation*}
    \dpropinquity{}((\A,\Lip_\A),(\B,\Lip_\B)) = \inf\left\{ \tunnelextent{\tau} : \tau \in \tunnelset{\A,\Lip_\A}{\B,\Lip_\B}{}\right\} \text.
  \end{equation*}
\end{definition}

By exploiting the Leibniz property, the Gromov--Hausdorff propinquity provides an analogue of the Gromov--Hausdorff distance to noncommutative geometry \cite{Latremoliere13,Latremoliere13c,Latremoliere14,Latremoliere15}, namely, a complete metric over the class of {\qcms s} which is zero between isometrically isomorphic {\qcms s}. Moreover, it induces the same topology as the usual Gromov--Hausdorff distance of the class of compact metric spaces (with the identification between a compact metric space $(X,d)$ and the {\qcms} $(C(X),\Lip)$ with $\Lip$ the usual Lipschitz seminorm).

Definition  \ref{qcms-def}  has proven helpful, and was the foundation for new metrics between certain higher structures over {\qcms s} \cite{Latremoliere16c,Latremoliere18b, Latremoliere18d}, including spectral triples \cite{Latremoliere18e}.

However, Rieffel's work on convergence of modules over {\qcms s} required a strengthening of the Leibniz property by requiring, in addition, that Lip norms be  well behaved with respect to the inverse map \cite{Rieffel09}, as follows.

\begin{definition}[\!{\cite[Definition 2.1]{Rieffel10}}]\label{strongly-Leibniz-def}
  A {\qcms} $(\A,\Lip)$ is \emph{$C$-strongly Leibniz}, for some $C \geq 1$, if for all $a\in \dom{\Lip}\cap \mathrm{GL}(\A)$;
  \begin{enumerate}
  \item $a^{-1} \in \dom{\Lip}$, and 
  \item $\Lip\left(a^{-1}\right) \leq C \norm{a^{-1}}{\A}^2 \Lip(a) \text.$
  \end{enumerate}
  We   say that $\Lip$ is \emph{$C$-strongly Leibniz} when it meets these conditions.
\end{definition}

As seen above, even when  estimating the propinquity between two strongly Leibniz {\qcms s}, the tunnels involved in the computation of the propinquity need not themselves use strongly Leibniz {\qcms s}. However, the propinquity is defined with some flexibility, which enables one to restrict the class of {\qcms s} involved in its construction. Thus, if one wishes to only work with strongly Leibniz {\qcms s}, then we may apply the methods of \cite{Latremoliere13} to obtain such a ``strongly Leibniz propinquity,'' which provides a natural framework, for example, for Rieffel's work on modules. 

\begin{notation}
  Let $C \geq 1$ and  let $(\A,\Lip_\A)$ and $(\B,\Lip_\B)$ be two $C$-strongly Leibniz {\qcms s}. The class of all tunnels from $(\A,\Lip_\A)$ to $(\B,\Lip_\B)$ of the form $(\D,\Lip_\D,\pi_\A,\pi_\B)$, where $(\D,\Lip_\D)$  is also $C$-strongly Leibniz, is denoted  
  \begin{equation*}
    \tunnelset{\A,\Lip_\A}{\B,\Lip_\B}{SL_C}{} \text.
  \end{equation*}
\end{notation}

\begin{definition}
  The \emph{$C$-strongly Leibniz propinquity} $\dpropinquity{SL_C}$ on the class $\mathcal{SL}_C$ of all $C$-strongly Leibniz {\qcms s} is defined by setting 
  \begin{equation*}
    \dpropinquity{SL_C}\left((\A_1,\Lip_1),(\A_2,\Lip_2)\right) = \inf\left\{ \tunnelextent{\tau} : \tau \in \tunnelset{\A_1,\Lip_1}{\A_2,\Lip_2}{SL_C} \right\} 
  \end{equation*}
  for any two $(\A_1,\Lip_1)$, $(\A_2,\Lip_2)$ in $\mathcal{SL}_C$. 
  If $C=1$, we denote $\dpropinquity{SL_C}$ by $\dpropinquity{SL}$.
\end{definition}

Of course, for any  $(\A,\Lip_\A)$, $(\B,\Lip_\B)$ in $\mathcal{SL}_C$, the following inequality holds:
\begin{equation}\label{sl-prop-prop-ineq}
  \dpropinquity{SL_C}((\A,\Lip_\A),(\B,\Lip_\B)) \geq \dpropinquity{}((\A,\Lip_\A),(\B,\Lip_\B)) \text.
\end{equation}
Thus, it is clear that the strongly Leibniz property is a symmetric function which is zero exactly between fully quantum isometric {\qcms s}. We prove in this paper that the strongly Leibniz propinquity is indeed a \emph{complete metric} up to full quantum isometry.

We then provide an application of the completeness of the strongly Leibniz propinquity to inductive limits of strongly Leibniz {\qcms s}. As discussed in \cite{Aguilar18}, a natural question in noncommutative metric geometry is to relate the   important categorical notion of the limit of inductive sequences of C*-algebras with the notion of convergence for the propinquity. The first work in this direction, found in \cite{Aguilar-Latremoliere15}, constructed a $(2,0)$-Leibniz Lip-norm on unital AF algebras with a faithful tracial state: starting from a unital AF algebra $\A$ with some choice of a faithful trace $t$  and some sequence $(\A_n)_{n\in\N}$ of finite-dimensional C*-subalgebras of $\A$ such that $\A$ is the closure of $\bigcup_{n\in\N}\A_n$, we use the existence of a unique surjective $t$-preserving conditional expectation $\mathds{E}_n : \A \rightarrow\A_n$ for each $n\in\N$ to define Lip-norms by
\begin{equation*}
  \forall a\in\sa{\A} \quad \Lip(a) = \sup\left\{ \dim{\A_n}\norm{a-\mathds{E}_n(a)}{\A} : n \in \N \right\},
\end{equation*}
allowing for the value $\infty$. In \cite{Aguilar-Latremoliere15}, the authors  prove that \[\lim_{n\rightarrow\infty}\dpropinquity{}((\A_n,\Lip),(\A,\Lip)) = 0,\] and then establish   continuity results for the propinquity of the classes of Effros--Shen algebras and UHF algebras, naturally parametrized by the Baire space.

However, it is natural to try  to construct Lip-norms on inductive limits of {\qcms s} (not necessarily finite dimensional), under appropriate conditions, without starting with an L-seminorm on the limit, but rather, by exploiting the completeness of the propinquity. We refer to \cite{Aguilar18} for examples. In this paper, we use the completeness of the strongly Leibniz propinquity to construct strongly Leibniz L-seminorms on certain inductive limits of strongly Leibniz {\qcms s}  and apply these results to AF algebras, thus obtaining new quantum metrics on some AF algebras. These new quantum metrics inherit their strong Leibniz properties from the work of Rieffel on the strongly Leibniz property of seminorms built from the standard deviations \cite{Rieffel12}, which we call Frobenius--Rieffel seminorms, and their careful study in finite dimensions in \cite{Aguilar-Garcia-Kim21}. We obtain continuity results for the Effros--Shen algebras and the UHF algebras, as parametrized by the Baire space, in the spirit of \cite{Aguilar-Latremoliere15}, but with our new, strongly Leibniz L-seminorms.

\section{The Strongly Leibniz Gromov--Hausdorff Propinquity}\label{s:s-leibniz-complete}


The dual Gromov--Hausdorff  propinquity \cite{Latremoliere13b,Latremoliere14,Latremoliere15} is defined in a flexible manner and permits one to restrict the choice of tunnels in the construction in order to work in a specific class of {\qcms s}. We  apply this flexibility   to define a specialization of the propinquity to the class of strongly Leibniz {\qcms s} \cite{Rieffel10c}, whose definition we recalled in Definition  \ref{strongly-Leibniz-def}. As it dominates the propinquity, it has the desired coincidence property, and it is obviously symmetric. We begin by proving that it satisfies the triangle inequality.

\begin{lemma}\label{l:sl-tunnel}
  Let  $(\A,\Lip_\A)$, $(\B,\Lip_\B)$ and $(\D,\Lip_\D)$ be in $\mathcal{SL_C}$, and let   $\pi_\A : (\A,\Lip_\A) \twoheadrightarrow (\D,\Lip_\D)$ and $\pi_\B : (\B,\Lip_\B) \twoheadrightarrow (\D,\Lip_\D)$ be quantum isometries. For all $\varepsilon > 0$, if we set, for all $(a,b) \in \dom{\Lip_\A}\oplus\dom{\Lip_\B}$,
  \begin{equation*}
    \TLip(a,b) = \max\left\{ \Lip_\A(a), \Lip_\B(b), \frac{1}{\varepsilon}\norm{\pi_\A(a) - \pi_\B(b)}{\D} \right\},
  \end{equation*}
  then $\TLip$ is a $C$-strongly Leibniz L-seminorm on $\A\oplus\B$.
\end{lemma}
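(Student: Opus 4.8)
The plan is to check the four conditions of Definition~\ref{qcms-def} together with the two conditions of Definition~\ref{strongly-Leibniz-def}, treating the L-seminorm axioms as the standard ``gluing'' construction underlying the triangle inequality for the propinquity and reserving the real work for the strongly Leibniz estimate. First I would note that $\dom{\Lip_\A}\oplus\dom{\Lip_\B}$ is a dense Jordan--Lie subalgebra of $\sa{\A\oplus\B}$, so $\TLip$ has the correct domain. The null space is immediate: $\TLip(a,b)=0$ forces $\Lip_\A(a)=\Lip_\B(b)=0$, hence $a=s\unit_\A$, $b=t\unit_\B$ for scalars $s,t$, and since $\pi_\A,\pi_\B$ are unital (being surjective $*$-morphisms of unital C*-algebras) the condition $\pi_\A(a)=\pi_\B(b)$ forces $s=t$; thus the null space is $\R(\unit_\A,\unit_\B)$. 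That $\Kantorovich{\TLip}$ metrizes the weak* topology on $\StateSpace(\A\oplus\B)$ and that the unit ball of $\TLip$ is closed follow from the established theory of the dual propinquity \cite{Latremoliere13c,Latremoliere15}.

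For the quasi-Leibniz inequality, the two coordinatewise terms inherit it from $\Lip_\A$ and $\Lip_\B$. For the third term I would use that $\pi_\A,\pi_\B$ preserve Jordan (and Lie) products and expand, with $x=\pi_\A(a)$, $x'=\pi_\A(a')$, $y=\pi_\B(b)$, $y'=\pi_\B(b')$, the difference $\pi_\A(a)\circ\pi_\A(a')-\pi_\B(b)\circ\pi_\B(b')$ into summands of the form $x(x'-y')$ and $(x-y)y'$; bounding each leading factor by contractivity ($\norm{x}{\D}\leq\norm{a}{\A}$, $\norm{y'}{\D}\leq\norm{b'}{\B}$) shows the norm term is itself $(1,0)$-Leibniz, so the maximum of the three seminorms satisfies the $(A,B)$-quasi-Leibniz inequality.

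The heart of the proof is the strongly Leibniz property. Fix $(a,b)\in\dom{\TLip}\cap\mathrm{GL}(\A\oplus\B)$; then $a\in\dom{\Lip_\A}\cap\mathrm{GL}(\A)$, $b\in\dom{\Lip_\B}\cap\mathrm{GL}(\B)$, and $(a,b)^{-1}=(a^{-1},b^{-1})$. Since $(\A,\Lip_\A)$ and $(\B,\Lip_\B)$ are $C$-strongly Leibniz, $a^{-1}\in\dom{\Lip_\A}$ and $b^{-1}\in\dom{\Lip_\B}$, giving $(a,b)^{-1}\in\dom{\TLip}$ and hence condition~(1). For condition~(2) I would bound each of the three terms of $\TLip(a^{-1},b^{-1})$ by $C\norm{(a,b)^{-1}}{\A\oplus\B}^2\TLip(a,b)$. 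The first two are immediate from the strongly Leibniz inequalities for $\Lip_\A,\Lip_\B$ together with $\norm{a^{-1}}{\A},\norm{b^{-1}}{\B}\leq\norm{(a,b)^{-1}}{\A\oplus\B}$ and $\Lip_\A(a),\Lip_\B(b)\leq\TLip(a,b)$.

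The only delicate term is $\frac{1}{\varepsilon}\norm{\pi_\A(a^{-1})-\pi_\B(b^{-1})}{\D}$, and this is where the main obstacle lies. The key is that a unital $*$-morphism carries inverses to inverses, so $\pi_\A(a^{-1})=\pi_\A(a)^{-1}$ and $\pi_\B(b^{-1})=\pi_\B(b)^{-1}$ with $\pi_\A(a),\pi_\B(b)\in\mathrm{GL}(\D)$. Writing $x=\pi_\A(a)$ and $y=\pi_\B(b)$, the resolvent identity $x^{-1}-y^{-1}=-x^{-1}(x-y)y^{-1}$ and contractivity of the $*$-morphisms (so $\norm{x^{-1}}{\D}\leq\norm{a^{-1}}{\A}$, $\norm{y^{-1}}{\D}\leq\norm{b^{-1}}{\B}$) yield
\begin{equation*}
  \frac{1}{\varepsilon}\norm{\pi_\A(a^{-1})-\pi_\B(b^{-1})}{\D}\leq\norm{a^{-1}}{\A}\,\norm{b^{-1}}{\B}\cdot\frac{1}{\varepsilon}\norm{\pi_\A(a)-\pi_\B(b)}{\D}\leq\norm{(a,b)^{-1}}{\A\oplus\B}^2\,\TLip(a,b),
\end{equation*}
where the last step uses $\norm{a^{-1}}{\A}\norm{b^{-1}}{\B}\leq\norm{(a,b)^{-1}}{\A\oplus\B}^2$ for the max norm and $\frac{1}{\varepsilon}\norm{\pi_\A(a)-\pi_\B(b)}{\D}\leq\TLip(a,b)$. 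Since $C\geq1$, this is at most $C\norm{(a,b)^{-1}}{\A\oplus\B}^2\TLip(a,b)$, and taking the maximum over the three terms completes condition~(2). The main difficulty is recognizing that $\pi_\A,\pi_\B$ commute with inversion so that the resolvent identity becomes available; once the $*$-morphism images are inverted inside $\D$, contractivity and the direct-sum norm do the rest.
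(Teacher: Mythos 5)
Your proposal is correct and follows essentially the same route as the paper: the L-seminorm axioms are delegated to the established tunnel construction (the paper cites \cite[Theorem 3.1]{Latremoliere14}), and the strongly Leibniz estimate for the norm term rests on the identity $\pi_\A(a^{-1})-\pi_\B(b^{-1})=\pi_\A(a^{-1})\left(\pi_\B(b)-\pi_\A(a)\right)\pi_\B(b^{-1})$, which is exactly your resolvent identity, combined with contractivity of the $*$-morphisms and the bound $\norm{a^{-1}}{\A}\norm{b^{-1}}{\B}\leq\norm{(a,b)^{-1}}{\A\oplus\B}^2$. The only difference is cosmetic: you verify the null-space and quasi-Leibniz conditions by hand where the paper cites them wholesale.
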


\begin{proof}
  By \cite[Theorem 3.1]{Latremoliere14}, $(\A\oplus\B,\TLip)$ is a {\qcms}. It is thus sufficient to prove that $\TLip$ is $C$-strongly Leibniz.

  Let $(a,b) \in \dom{\Lip_\A}\oplus\dom{\Lip_\B} = \dom{\TLip}$ such that $(a,b) \in \mathrm{GL}(\A\oplus\B)$. Thus $a\in\dom{\Lip_\A}\cap\mathrm{GL}(\A)$ and $b \in \dom{\Lip_\B}\cap \mathrm{GL}(\B)$. Since $\Lip_\A$ and $\Lip_\B$ are $C$-strongly Leibniz, we conclude that $(a,b)^{-1} = (a^{-1},b^{-1}) \in \dom{\TLip}$ and
  \begin{equation*}
    \Lip_\A(a^{-1}) \leq C \norm{a^{-1}}{\A}^2 \Lip_\A(a) \quad  \text{ and } \quad \Lip_\B(b^{-1}) \leq C \norm{b^{-1}}{\B}^2 \Lip_\B(b) \text.
  \end{equation*}

  On the other hand,
  \begin{align*}
    \norm{\pi_\A(a^{-1}) - \pi_\B(b^{-1})}{\D}
    &= \norm{\pi_\A(a^{-1})(\pi_\B(b) - \pi_\A(a))\pi_\B(b^{-1}) }{\D} \\
    &\leq \norm{a^{-1}}{\A} \norm{\pi_\A(a) - \pi_\B(b)}{\D} \norm{b^{-1}}{\B} \\
    &\leq \norm{a^{-1}}{\A}\norm{b^{-1}}{\B} \varepsilon \TLip(a,b) \\
    &\leq \varepsilon \norm{(a^{-1},b^{-1})}{\A\oplus\B}^2 \TLip(a,b) \text.
  \end{align*}

  Therefore, $\TLip(a^{-1},b^{-1}) \leq C \norm{(a^{-1},b^{-1})}{\A\oplus\B}^2 \TLip(a,b)$, as needed.
\end{proof}

Thus, as discussed in \cite{Latremoliere13b,Latremoliere14,Latremoliere15}, we deduce the following lemma.
\begin{lemma}
  For all $C \geq 1$,  the $C$-strongly Leibniz propinquity $\dpropinquity{SL_C}$ is a metric on $\mathcal{SL_C}$.
\end{lemma}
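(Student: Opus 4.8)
The plan is to establish the three defining properties of a metric up to full quantum isometry, drawing on the fact that $\dpropinquity{SL_C}$ dominates $\dpropinquity{}$ together with Lemma \ref{l:sl-tunnel}. Nonnegativity and symmetry are immediate: given a tunnel $\tau = (\D,\Lip_\D,\pi_{\A_1},\pi_{\A_2}) \in \tunnelset{\A_1,\Lip_1}{\A_2,\Lip_2}{SL_C}$, interchanging the two quantum isometries yields a tunnel in $\tunnelset{\A_2,\Lip_2}{\A_1,\Lip_1}{SL_C}$ of the same extent, so the two infima coincide. For the coincidence property, if $(\A_1,\Lip_1)$ and $(\A_2,\Lip_2)$ are fully quantum isometric via $\pi$, then the tunnel with domain $(\A_1,\Lip_1)$ and quantum isometries $\mathrm{id}_{\A_1}$ and $\pi$ lies in $\tunnelset{\A_1,\Lip_1}{\A_2,\Lip_2}{SL_C}$ --- since $(\A_1,\Lip_1)$ is itself $C$-strongly Leibniz --- and has extent $0$; conversely, if $\dpropinquity{SL_C}((\A_1,\Lip_1),(\A_2,\Lip_2)) = 0$, then \eqref{sl-prop-prop-ineq} forces $\dpropinquity{}((\A_1,\Lip_1),(\A_2,\Lip_2)) = 0$, and since $\dpropinquity{}$ is a metric up to full quantum isometry, the two spaces are fully quantum isometric.

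The substantive step is the triangle inequality. Given $(\A_1,\Lip_1),(\A_2,\Lip_2),(\A_3,\Lip_3) \in \mathcal{SL}_C$ and $\varepsilon > 0$, I would select tunnels $\tau_1 = (\D_1,\Lip_{\D_1},\pi_1,\pi_2) \in \tunnelset{\A_1,\Lip_1}{\A_2,\Lip_2}{SL_C}$ and $\tau_2 = (\D_2,\Lip_{\D_2},\rho_2,\rho_3) \in \tunnelset{\A_2,\Lip_2}{\A_3,\Lip_3}{SL_C}$ whose extents approximate the respective strongly Leibniz propinquities to within $\varepsilon$, and then compose them exactly as in the general theory of the propinquity \cite{Latremoliere14,Latremoliere15}. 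Concretely, on $\D_1 \oplus \D_2$ one sets
\[
  \TLip(d_1,d_2) = \max\left\{ \Lip_{\D_1}(d_1),\ \Lip_{\D_2}(d_2),\ \frac{1}{\varepsilon}\norm{\pi_2(d_1) - \rho_2(d_2)}{\A_2} \right\},
\]
which is precisely the seminorm of Lemma \ref{l:sl-tunnel} applied with the shared space $(\A_2,\Lip_2)$ and the quantum isometries $\pi_2,\rho_2$. That lemma guarantees that $(\D_1\oplus\D_2,\TLip)$ is again a $C$-strongly Leibniz {\qcms}, so that the tunnel obtained by composing $\pi_1$ and $\rho_3$ with the two coordinate projections belongs to $\tunnelset{\A_1,\Lip_1}{\A_3,\Lip_3}{SL_C}$; the extent estimate of the composed tunnel, namely $\tunnelextent{\tau_1}+\tunnelextent{\tau_2}+\varepsilon$, is inherited unchanged from the composition lemma for $\dpropinquity{}$. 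Taking infima over $\tau_1,\tau_2$ and letting $\varepsilon\to 0$ gives the triangle inequality.

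I expect the only genuinely new point --- and hence the main obstacle --- to be the verification that the glued seminorm $\TLip$ remains $C$-strongly Leibniz, since the extent bookkeeping and the proof that the coordinate compositions are quantum isometries are insensitive to the strong Leibniz condition and carry over verbatim from the established propinquity theory. This obstacle, however, is exactly what Lemma \ref{l:sl-tunnel} resolves. The remaining care is purely bookkeeping: ensuring that every intermediate {\qcms} in the composition lies in $\mathcal{SL}_C$ with the \emph{same} constant $C$ (rather than a degraded one), which Lemma \ref{l:sl-tunnel} confirms, so that the composed tunnel is genuinely an element of the restricted tunnel class used to define $\dpropinquity{SL_C}$.
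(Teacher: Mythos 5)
Your proposal is correct and takes essentially the same route as the paper: the paper proves Lemma \ref{l:sl-tunnel} precisely so that the standard tunnel-composition argument from the general propinquity theory \cite{Latremoliere13b,Latremoliere14,Latremoliere15} can be run entirely inside $\mathcal{SL}_C$ (gluing two near-optimal tunnels along the shared space $(\A_2,\Lip_2)$, with symmetry by tunnel reversal and coincidence via the domination inequality \eqref{sl-prop-prop-ineq}), and then deduces the lemma by citing that machinery. The only difference is presentational, in that the paper leaves this bookkeeping implicit while you spell it out.
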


Our purpose is to prove that the $C$-strongly Leibniz propinquity is complete as well. Since the propinquity is complete by Equation (\ref{sl-prop-prop-ineq}), we already have  a description of the limit of any Cauchy sequence in $\left(\mathcal{SL}_C,\dpropinquity{SL_C}\right)$  from \cite{Latremoliere13b}.  What remains to be shown is that the limit  for the metric $\dpropinquity{}$  of a Cauchy sequence for the metric $\dpropinquity{SL_C}$ is indeed $C$-strongly Leibniz; moreover, we have to check the tunnels constructed in \cite{Latremoliere13b} are $C$-strongly Leibniz in the current setting.

\medskip

In this section, we fix $C\geq 1$, and we assume that we are given a sequence $(\A_n,\Lip_n)_{n\in\N}$ of $C$-strongly Leibniz {\qcms s} such that  for each $n\in\N$, there exists  a tunnel $(\tau_n)_{n\in\N} = (\D_n,\TLip_n,\pi_n,\rho_n)_{n\in\N}$ from $(\A_n,\Lip_n)$ to $(\A_{n+1},\Lip_{n+1})$ such that $\tunnelextent{\tau_n} \leq \frac{1}{2^n}$  and $\TLip_n$ is $C$-strongly Leibniz.

Following \cite[Section 6]{Latremoliere13b}, we  set 
\begin{equation*}
  \alg{S} = \left\{ (d_n)_{n\in\N} \in \prod_{n\in\N} \D_n : \forall n\in \N  \ \ \ \rho_n(d_n) = \pi_{n+1}(d_{n+1}) \text{ and }\sup_{n\in\N}\norm{d_n}{\D_n} < \infty \right\} \text,
\end{equation*}
and
\begin{equation*}
  \alg{F} = \bigslant{\alg{S}}{\left\{ (d_n)_{n\in\N} \in \alg{S} : \lim_{n\rightarrow\infty} d_n = 0 \right\}} \text.
\end{equation*}
Let $q : \alg{S} \twoheadrightarrow \alg{F}$ be the canonical surjection, which is a *-epimorphism.

We record the following well-known computation for the quotient norm on $\alg{F}$.

\begin{lemma}\label{limsup-norm-lemma}
  For all $a\in \alg{F}$  and for all $(d_n)_{n\in\N} \in \alg{S}$, if $q((d_n)_{n\in\N}) = a$, then
  \begin{equation*}
    \norm{a}{\alg{F}} = \limsup_{n\rightarrow\infty} \norm{d_n}{\D_n} \text.
  \end{equation*}
\end{lemma}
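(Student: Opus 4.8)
The plan is to compute the quotient norm on $\alg{F}$ directly from its definition as an infimum and match it to the $\limsup$ through two inequalities. Write $\alg{I} = \left\{ (e_n)_{n\in\N} \in \alg{S} : \lim_{n\rightarrow\infty} e_n = 0 \right\}$ for the ideal by which we quotient, so that $\norm{a}{\alg{F}} = \inf\left\{ \sup_{n\in\N} \norm{d_n + e_n}{\D_n} : (e_n)_{n\in\N} \in \alg{I} \right\}$ for any representative $(d_n)_{n\in\N}$ of $a$. First I would record that the proposed right-hand side $\limsup_n \norm{d_n}{\D_n}$ is independent of the chosen representative: two representatives of $a$ differ by a sequence converging to $0$, and the reverse triangle inequality forces their norm-sequences to have equal $\limsup$.

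The easy inequality $\norm{a}{\alg{F}} \geq \limsup_n \norm{d_n}{\D_n}$ follows because for any $(e_n)_{n\in\N} \in \alg{I}$ one has $\norm{e_n}{\D_n} \to 0$, whence $\sup_n \norm{d_n + e_n}{\D_n} \geq \limsup_n \norm{d_n + e_n}{\D_n} \geq \limsup_n \norm{d_n}{\D_n}$; taking the infimum over $(e_n)_{n\in\N}$ yields the bound.

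For the reverse inequality, fix $\varepsilon > 0$ and set $R = \limsup_n \norm{d_n}{\D_n} + \varepsilon$. The naive idea of truncating $(d_n)_{n\in\N}$ to $0$ past a large index fails, because the truncated sequence need not satisfy the compatibility relation $\rho_n(\,\cdot\,) = \pi_{n+1}(\,\cdot\,)$ defining $\alg{S}$; preserving this compatibility while shrinking the norm is the one genuine obstacle. To resolve it I would use continuous functional calculus, which commutes with the $\ast$-homomorphisms $\rho_n$ and $\pi_{n+1}$. Let $f_R$ be the continuous function on $[0,\infty)$ given by $f_R(t) = \min\{1, R/t\}$ for $t > 0$ and $f_R(0) = 1$, and set $d_n' = d_n\, f_R(\lvert d_n \rvert)$ with $\lvert d_n \rvert = (d_n^\ast d_n)^{1/2}$. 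Computing $(d_n')^\ast d_n' = f_R(\lvert d_n \rvert)\lvert d_n \rvert^2 f_R(\lvert d_n \rvert)$ shows $\norm{d_n'}{\D_n} = \min\{\norm{d_n}{\D_n}, R\} \leq R$ and $\norm{d_n - d_n'}{\D_n} = \max\{\norm{d_n}{\D_n} - R, 0\}$.

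Since any $\ast$-homomorphism preserves the absolute value and continuous functional calculus, writing $x = \rho_n(d_n) = \pi_{n+1}(d_{n+1})$ gives $\rho_n(d_n') = x\, f_R(\lvert x \rvert) = \pi_{n+1}(d_{n+1}')$, so $(d_n')_{n\in\N} \in \alg{S}$ with $\sup_n \norm{d_n'}{\D_n} \leq R$. Because $\limsup_n \norm{d_n}{\D_n} < R$, we get $\norm{d_n - d_n'}{\D_n} = 0$ for all large $n$, so $(d_n - d_n')_{n\in\N} \in \alg{I}$ and hence $q((d_n')_{n\in\N}) = a$. Thus $\norm{a}{\alg{F}} \leq \sup_n \norm{d_n'}{\D_n} \leq R = \limsup_n \norm{d_n}{\D_n} + \varepsilon$, and letting $\varepsilon \to 0$ completes the argument. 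The functional-calculus cutoff is what makes this work: it lowers every norm to at most $R$ while staying compatible with the connecting $\ast$-morphisms, which is precisely the difficulty that a crude truncation cannot overcome.
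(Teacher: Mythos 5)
Your proof is correct, and it departs from the paper's argument at precisely the step where the paper's own proof is deficient. The two proofs share the same skeleton: the inequality $\limsup_{n\rightarrow\infty}\norm{d_n}{\D_n} \leq \norm{a}{\alg{F}}$ is obtained in essentially the same way in both (perturbing a representative by an element of the ideal does not change the $\limsup$), so the content lies in the reverse inequality $\norm{a}{\alg{F}} \leq \limsup_{n\rightarrow\infty}\norm{d_n}{\D_n}$. For that inequality, the paper replaces the first $N$ entries of $(d_n)_{n\in\N}$ by $0$ and asserts that the resulting sequence $d^N$ satisfies $q(d^N)=a$. But $d^N$ need not lie in $\alg{S}$ at all: the compatibility condition at index $N-1$ requires $\rho_{N-1}(0)=\pi_N(d_N)$, that is $\pi_N(d_N)=0$, which fails already for the constant sequence $d_n=\unit_{\D_n}$; thus $q(d^N)$ is not even defined, and the paper's proof as written has a gap at this point.

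This is exactly the obstruction you single out---a crude truncation destroys the relation $\rho_n(d_n)=\pi_{n+1}(d_{n+1})$---and your cutoff $d_n' = d_n f_R(|d_n|)$ genuinely repairs it: the connecting maps, being surjective $*$-morphisms onto unital C*-algebras, are unital and hence commute with the continuous functional calculus, so $(d_n')_{n\in\N}$ remains compatible and bounded, its norms are capped at $R=\limsup_{n\rightarrow\infty}\norm{d_n}{\D_n}+\varepsilon$, and it coincides with $d_n$ whenever $\norm{d_n}{\D_n}\leq R$, hence for all large $n$, so it represents the same class $a$. One small point deserves to be made explicit: unitality of the $\pi_n$ and $\rho_n$ is actually used, since $f_R(0)=1\neq 0$; if one prefers, write $f_R = 1 + h_R$ with $h_R(0)=0$ and apply the morphisms to $d_n + d_n h_R(|d_n|)$, which avoids the issue for arbitrary $*$-morphisms. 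In short, your argument is complete and correct, and the functional-calculus cutoff---or some other compatibility-preserving modification of the head of the sequence---is not a stylistic choice but a necessity; the paper's simpler truncation would need precisely such a device to go through.
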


\begin{proof}
  Note that $\norm{a}{\alg{F}} \leq \norm{d}{\alg{S}}$ whenever $q(d) = a$. Thus,   if $N \in \N$  and   $d^N = (\underbracket[1pt]{0,\ldots,0}_{N \text{ times}}, d_{N}, d_{N+1}, \ldots)$, then $q(d^N) = a$, and thus
  \begin{equation*}
    \norm{a}{\alg{F}} \leq \norm{d^N}{\alg{S}} = \sup_{n\geq N} \norm{d_n}{\D_n} \text.
  \end{equation*}
  Therefore, $\norm{a}{\alg{F}} \leq \limsup_{n\rightarrow\infty} \norm{d_n}{\D_n}$.

  Now, let $\varepsilon > 0$. By definition of the norm on the quotient C*-algebra $\alg{F}$, there exists $e = (e_n)_{n\in\N} \in \alg{S}$ such that $q(e)=a$ and $\norm{e}{\alg{S}} - \varepsilon \leq \norm{a}{\alg{F}} \leq \norm{e}{\alg{S}}$.
 Also $\lim_{n\rightarrow\infty} \norm{e_n - d_n}{\D_n} = 0$. Thus, there exists $N \in \N$ such that  $\norm{e_n-d_n}{\D_n} < \varepsilon $ for all $n \geq N$.

  Therefore,
  \begin{equation*}
    \limsup_{n\rightarrow\infty} \norm{d_n}{\D_n} \leq \limsup_{n\rightarrow\infty}\norm{e_n}{\D_n} + \varepsilon \leq \norm{e}{\alg{S}} + \varepsilon \leq \norm{a}{\alg{F}} + 2 \varepsilon\text.
  \end{equation*}
  
  As $\varepsilon > 0$ is arbitrary, we conclude:
  \begin{equation*}
    \limsup_{n\rightarrow\infty} \norm{d_n}{\D_n} \leq \norm{a}{\alg{F}} \text.\qedhere 
  \end{equation*} 
\end{proof}

For any $d = (d_n)_{n\in\N} \in \sa{\alg{S}}$, we let
\begin{equation*}
  \mathsf{S}(d) = \sup_{n\in\N} \TLip_n(d_n),
\end{equation*}
allowing for the value $\infty$. For all $a\in\sa{\alg{F}}$, we define 
\begin{equation*}
  \mathsf{Q}(a) = \inf\left\{ \mathsf{S}(d) : q(d) = a \right\} \text,
\end{equation*}
again, allowing $\infty$. By \cite{Latremoliere13b}, the seminorm $\mathsf{Q}$ is   a Leibniz L-seminorm on $\alg{F}$. The key fact  established in \cite{Latremoliere13b}  is that $(\A_n,\Lip_n)_{n\in\N}$ converges to $(\alg{F},\mathsf{Q})$ for the propinquity. We now show that, under our conditions, $\mathsf{Q}$ is also $C$-strongly Leibniz.

\begin{lemma}\label{l:complete-sl-lip}
  The L-seminorm $\mathsf{Q}$ is $C$-strongly Leibniz.
\end{lemma}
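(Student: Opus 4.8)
The plan is to reduce the strong Leibniz inequality for $\mathsf{Q}$ to the coordinatewise strong Leibniz inequalities for the seminorms $\TLip_n$, together with Lemma \ref{limsup-norm-lemma}. The only genuine difficulty is that a representative of an invertible element of $\alg{F}$ need not be invertible in every coordinate, and that one cannot simply patch the finitely many bad coordinates without destroying the compatibility condition $\rho_n(d_n) = \pi_{n+1}(d_{n+1})$ that defines $\alg{S}$.

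First I would fix $a \in \dom{\mathsf{Q}} \cap \mathrm{GL}(\alg{F})$ and $\varepsilon > 0$, and choose a self-adjoint representative $d = (d_n)_{n\in\N} \in \sa{\alg{S}}$ with $q(d) = a$ and $\mathsf{S}(d) \le \mathsf{Q}(a) + \varepsilon$. Since $a$ is invertible, picking any representative $e = (e_n)_{n\in\N}$ of $a^{-1}$ gives $q(de) = q(ed) = 1_{\alg{F}}$, so $\norm{d_n e_n - 1_{\D_n}}{\D_n} \to 0$ and $\norm{e_n d_n - 1_{\D_n}}{\D_n} \to 0$; hence there is $N_0$ with $d_n$ invertible in $\D_n$ for all $n \ge N_0$. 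The estimate $\norm{d_n^{-1}}{\D_n}\bigl(1 - \norm{1_{\D_n} - d_n e_n}{\D_n}\bigr) \le \norm{e_n}{\D_n}$ shows $\norm{d_n^{-1}}{\D_n}$ stays bounded for large $n$, whence $\norm{d_n^{-1} - e_n}{\D_n} \le \norm{d_n^{-1}}{\D_n}\norm{1_{\D_n} - d_n e_n}{\D_n} \to 0$, so by Lemma \ref{limsup-norm-lemma} we get $\limsup_n \norm{d_n^{-1}}{\D_n} = \norm{a^{-1}}{\alg{F}}$. Given $\delta > 0$, I then fix $N \ge N_0$ with $\norm{d_n^{-1}}{\D_n} \le \norm{a^{-1}}{\alg{F}} + \delta$ for all $n \ge N$.

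Next I would assemble a representative $c = (c_n)_{n\in\N}$ of $a^{-1}$ coordinate by coordinate. For $n \ge N$ set $c_n = d_n^{-1}$; these are self-adjoint and lie in $\dom{\TLip_n}$ because $\TLip_n$ is $C$-strongly Leibniz, with $\TLip_n(c_n) \le C\norm{d_n^{-1}}{\D_n}^2 \TLip_n(d_n) \le C(\norm{a^{-1}}{\alg{F}} + \delta)^2(\mathsf{Q}(a)+\varepsilon)$. Compatibility on the tail holds because $\rho_n$ and $\pi_{n+1}$ are unital $\ast$-homomorphisms that carry invertibles to invertibles and commute with inversion, so $\rho_n(d_n^{-1}) = \rho_n(d_n)^{-1} = \pi_{n+1}(d_{n+1})^{-1} = \pi_{n+1}(d_{n+1}^{-1})$. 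For the finitely many coordinates $n < N$ I would descend recursively: given $c_{n+1} \in \dom{\TLip_{n+1}}$, the quantum isometry $\rho_n$ lets me choose a self-adjoint $c_n \in \dom{\TLip_n}$ with $\rho_n(c_n) = \pi_{n+1}(c_{n+1})$ and $\TLip_n(c_n) \le \Lip_{n+1}(\pi_{n+1}(c_{n+1})) + \eta_n \le \TLip_{n+1}(c_{n+1}) + \eta_n$, where the $\eta_n$ are chosen with $\sum_n \eta_n \le \eta$. This guarantees $c \in \alg{S}$, that $c$ agrees with the inverse on the tail (so $q(c) = a^{-1}$, since $c_n d_n = 1_{\D_n}$ for $n \ge N$ forces $q(cd) = 1_{\alg{F}}$), and that each head term is controlled by the tail via telescoping: $\TLip_n(c_n) \le \TLip_N(c_N) + \eta$ for every $n < N$.

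Finally I would combine the bounds. Since both the head terms and the tail terms are dominated by $C(\norm{a^{-1}}{\alg{F}} + \delta)^2(\mathsf{Q}(a)+\varepsilon) + \eta$, we obtain $\mathsf{S}(c) = \sup_n \TLip_n(c_n) \le C(\norm{a^{-1}}{\alg{F}} + \delta)^2(\mathsf{Q}(a)+\varepsilon) + \eta$, hence $\mathsf{Q}(a^{-1}) \le \mathsf{S}(c)$ is finite and bounded by the same quantity. Letting $\eta, \delta, \varepsilon \to 0$ yields $a^{-1} \in \dom{\mathsf{Q}}$ together with $\mathsf{Q}(a^{-1}) \le C\norm{a^{-1}}{\alg{F}}^2 \mathsf{Q}(a)$, as required. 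I expect the main obstacle to be exactly the bookkeeping forced by the compatibility constraint: the clean inversion identity only holds on a tail, and it is the recursive descent with summable errors $\eta_n$ that prevents the finitely many remaining coordinates from inflating the supremum defining $\mathsf{S}(c)$.
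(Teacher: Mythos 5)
Your proof is correct, and its skeleton matches the paper's: take a near-optimal self-adjoint representative $d$ of $a$, use a representative $e$ of $a^{-1}$ to get invertibility of $d_n$ for all $n\geq N$, apply the coordinatewise $C$-strong Leibniz inequality on the tail, and convert $\limsup_{n\rightarrow\infty}\norm{d_n^{-1}}{\D_n}$ into $\norm{a^{-1}}{\alg{F}}$ via Lemma~\ref{limsup-norm-lemma}. The genuine difference is the treatment of the finitely many head coordinates, and there your version is the more careful one. The paper replaces $d_n$ by $\unit_n$ for $n<N$ and declares the resulting sequences $h$ and $g$ to lie in $\mathrm{GL}(\alg{S})$; taken literally, this breaks the defining constraint of $\alg{S}$ at the junction index, since $\rho_{N-1}(\unit_{N-1})=\unit_{\A_N}$ while $\pi_N(d_N)=\rho_{N-1}(d_{N-1})$ need not be the unit. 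The intended justification is the identification of $\alg{F}$ with the quotient of the shifted algebra $\alg{S}_N$ by its sequences vanishing at infinity (introduced explicitly only in the completeness theorem that follows), under which the truncated tail $(d_n)_{n\geq N}$ is already an invertible element and no patching is required. You instead take the constraint $\rho_n(c_n)=\pi_{n+1}(c_{n+1})$ at face value: the tail $c_n=d_n^{-1}$ is automatically compatible because *-morphisms commute with inversion, and you fill in the head by recursively lifting $\pi_{n+1}(c_{n+1})$ through the quantum isometries $\rho_n$ with summable errors $\eta_n$, paying only a telescoped $+\eta$ in the final estimate. Both routes end at $\mathsf{Q}(a^{-1})\leq C\left(\norm{a^{-1}}{\alg{F}}+\delta\right)^2\left(\mathsf{Q}(a)+\varepsilon\right)+\eta$ and hence at the strong Leibniz bound; what your extra bookkeeping buys is an argument carried out entirely inside $\alg{S}$ itself, which fills in the one step the paper leaves informal.
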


\begin{proof}
  Let $a\in\dom{\mathsf{Q}}\cap\mathrm{GL}(\alg{F})$ and $\varepsilon > 0$. There exists $d = (d_n)_{n\in\N} \in \dom{\mathsf{S}}$ such that $q(d) = a$, and $\mathsf{S}(d) - \varepsilon \leq \mathsf{Q}(a) \leq \mathsf{S}(d)$.

  Let $e = (e_n)_{n\in\N} \in \sa{\alg{S}}$ such that $q(e) = a^{-1}$. By definition, since $q(e d) = q(d e) = \unit_{\alg{F}}$, we conclude that 
  \begin{equation*}
    \lim_{n\rightarrow\infty} \norm{ d_n e_n - \unit_n }{\D_n} = 0\text,
  \end{equation*}
  and thus, there exists $N\in\N$ such that $\norm{d_n e_n - \unit_n}{\D_n} < 1$ for all $n \geq N$. Therefore, $d_n e_n$ is invertible in $\D_n$ for all $n\geq N$.
  Consequently, if $n \geq N$, then $d_n (e_n (d_n e_n)^{-1}) = \unit_n$. Since $d$ and $e$ are self-adjoint,  $\norm{e_n d_n - \unit_n}{\D_n} < 1$, and thus $e_n d_n \in \mathrm{GL}(\D_n)$ for $n\geq N$; moreover $((e_n d_n)^{-1} e_n) d_n = \unit_n$. Thus, for all $n\geq N$,  $d_n \in \mathrm{GL}(\D_n)$.
  
  Now let $h=(h_n)_{n\in\N} \in \alg{S}$ be defined by setting, for all $n\in \N$:
  \begin{equation*}
    h_n = \begin{cases}
      \unit_n \text{ if $n < N$, }\\
      d_n \text{ if $n \geq N$.}
    \end{cases}
  \end{equation*}
  By construction, $h \in \mathrm{GL}(\alg{S})$ and $q(h) = a$. Moreover, $q(h^{-1}) = a^{-1}$ and  
  \begin{equation*}
    h^{-1} = \left( \unit_0,\ldots,\unit_{N-1},d_N^{-1},d_{N+1}^{-1},\ldots\right) \text.
  \end{equation*}

  Since $\TLip_n$ is $C$-strongly Leibniz,   $h_n^{-1} \in \dom{\TLip_n}$ for all $n \in \N$, and
  \begin{equation*}
    \TLip_n(h_n^{-1}) \leq C \norm{h_n^{-1}}{\D_n}^2 \TLip_n(h_n) \text.
  \end{equation*}

  Moreover, $\mathsf{S}(h) \leq \mathsf{S}(d)$ (since $\TLip_n(\unit_n) = 0$), so $\mathsf{S}(h) \leq \mathsf{Q}(a) + \varepsilon$. By Lemma (\ref{limsup-norm-lemma}), $\norm{a^{-1}}{\alg{F}} = \limsup_{n\rightarrow\infty} \norm{h^{-1}_n}{\D_n}$. Thus, there exists $N' \in \N$ such that, if $n\geq N'$, then
  \begin{equation*}
    \sup_{n\geq N'} \norm{d_n^{-1}}{\D_n} - \varepsilon \leq \norm{a^{-1}}{\alg{F}} \leq \sup_{n\geq N'} \norm{d_n^{-1}}{\D_n} \text.
  \end{equation*}
  
  Let $N'' = \max\{N,N'\}$ and define   $g \in \alg{S}$   by $g=(g_n)_{n\in\N}$ with
  \begin{equation*}
    \forall n \in \N \quad g_n = \begin{cases}
      \unit_n \text{ if $n < N''$,}\\
      d_n \text{ if $n \geq N''$.}
    \end{cases}
  \end{equation*}
  Once again, note that $q(g) = a$ and $\mathsf{S}(g) \leq \mathsf{S}(h) \leq \mathsf{Q}(a) + \varepsilon$. Moreover, $g \in \mathrm{GL}(\alg{S})$, with $q(g^{-1})=a^{-1}$.
  
  Thus,  
  \begin{align*}
    \mathsf{Q}(a^{-1})
    &\leq \mathsf{S}(g^{-1}) \\
    &\leq \sup_{n\in\N} \TLip_n(g_n^{-1}) \\
    &\leq \sup_{n\geq N''} \TLip_n(d_n^{-1}) \\
    &\leq C \sup_{n\geq N''} \norm{d_n^{-1}}{\D_n}^2 \sup_{n\geq N''} \TLip_n(d_n) \\
    &\leq C \left( \norm{a^{-1}}{\alg{F}} + \varepsilon \right)^2 \mathsf{S}(g) \\
    &\leq C \left( \norm{a^{-1}}{\alg{F}} + \varepsilon \right)^2 \left( \mathsf{Q}(a) + \varepsilon \right) \text.
  \end{align*}

  Since $\varepsilon > 0$ is arbitrary, 
  \begin{equation*}
    \mathsf{Q}(a^{-1}) \leq C \norm{a^{-1}}{\alg{F}}^2 \mathsf{Q}(a)\text.
  \end{equation*}

  Thus, $\mathsf{Q}$ is strongly Leibniz.
\end{proof}

We   now  deduce the following theorem.

\begin{theorem}
 Let $C\geq 1$. The $C$-strongly Leibniz propinquity on $\mathcal{SL}_C$ is complete.
\end{theorem}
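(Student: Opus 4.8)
The plan is to reduce the statement to the two facts already isolated in Lemmas~\ref{l:sl-tunnel} and~\ref{l:complete-sl-lip}, exploiting that the ordinary propinquity $\dpropinquity{}$ is complete and is dominated by $\dpropinquity{SL_C}$ through Inequality~\eqref{sl-prop-prop-ineq}. First I would fix an arbitrary Cauchy sequence $(\A_n,\Lip_n)_{n\in\N}$ in $\left(\mathcal{SL}_C,\dpropinquity{SL_C}\right)$. Since a Cauchy sequence converges whenever one of its subsequences does, I would pass to a subsequence with $\dpropinquity{SL_C}\left((\A_n,\Lip_n),(\A_{n+1},\Lip_{n+1})\right)<\frac{1}{2^n}$, and then, directly from the definition of $\dpropinquity{SL_C}$ as an infimum over $C$-strongly Leibniz tunnels, select for each $n$ a tunnel $\tau_n=(\D_n,\TLip_n,\pi_n,\rho_n)\in\tunnelset{\A_n,\Lip_n}{\A_{n+1},\Lip_{n+1}}{SL_C}$ with $\tunnelextent{\tau_n}\leq\frac{1}{2^n}$. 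This is exactly the standing setup of the present section, so $\alg{F}$ and $\mathsf{Q}$ are defined, and by~\eqref{sl-prop-prop-ineq} the whole problem reduces to two tasks: identifying the limit and certifying that the convergence survives in the finer metric.

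By \cite{Latremoliere13b}, the pair $(\alg{F},\mathsf{Q})$ is a quantum compact metric space in our fixed quasi-Leibniz class and satisfies $\lim_{n\to\infty}\dpropinquity{}\left((\A_n,\Lip_n),(\alg{F},\mathsf{Q})\right)=0$, this convergence being witnessed by tunnels $\theta_n$ from $(\A_n,\Lip_n)$ to $(\alg{F},\mathsf{Q})$ whose extents tend to $0$. By Lemma~\ref{l:complete-sl-lip}, $\mathsf{Q}$ is $C$-strongly Leibniz, so $(\alg{F},\mathsf{Q})\in\mathcal{SL}_C$ is the candidate limit; moreover, by the domination inequality it is the only possible $\dpropinquity{SL_C}$-limit of the subsequence.

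It remains to upgrade $\dpropinquity{}$-convergence to $\dpropinquity{SL_C}$-convergence, i.e.\ to verify that the witnessing tunnels $\theta_n$ may be taken $C$-strongly Leibniz, and this is the main obstacle. Here I would not merely invoke the existence of the tunnels of \cite{Latremoliere13b} but trace through their construction, observing that each quantum compact metric space occurring in a $\theta_n$ is assembled from the $C$-strongly Leibniz spaces $(\D_n,\TLip_n)$ and $(\alg{F},\mathsf{Q})$ by the two operations whose compatibility with the strongly Leibniz property we have already established: forming a bridge L-seminorm on a direct sum along quantum isometries into a common target, which is $C$-strongly Leibniz by Lemma~\ref{l:sl-tunnel}; and forming a supremum of the seminorms $\TLip_n$ over a tail of the sequence, which is $C$-strongly Leibniz by the same inversion estimate driving the proof of Lemma~\ref{l:complete-sl-lip}. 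Granting this, each $\theta_n$ belongs to $\tunnelset{\A_n,\Lip_n}{\alg{F},\mathsf{Q}}{SL_C}$, so $\dpropinquity{SL_C}\left((\A_n,\Lip_n),(\alg{F},\mathsf{Q})\right)\leq\tunnelextent{\theta_n}\to 0$. Thus the subsequence, and hence the original Cauchy sequence, converges in $\dpropinquity{SL_C}$ to a point of $\mathcal{SL}_C$, establishing completeness. The only genuine work is the bookkeeping of re-examining \cite{Latremoliere13b} with the inverse map in view, precisely as flagged at the start of this section.
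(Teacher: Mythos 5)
Your proposal is correct and takes essentially the same route as the paper: pass to a rapidly Cauchy subsequence to obtain the standing setup of tunnels $\tau_n$ with extent at most $2^{-n}$, identify the candidate limit as $(\alg{F},\mathsf{Q})$ (which lies in $\mathcal{SL}_C$ by Lemma \ref{l:complete-sl-lip}), and then upgrade $\dpropinquity{}$-convergence to $\dpropinquity{SL_C}$-convergence by checking that the witnessing tunnels of \cite{Latremoliere13b}---namely the tail algebras $\alg{S}_N$ with the supremum seminorms $\mathsf{S}_N$ and the maps $\Pi_N$, $q_N$---are $C$-strongly Leibniz, which is exactly the one-line tail-supremum estimate the paper carries out. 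The only cosmetic difference is that you also invoke the direct-sum bridge construction of Lemma \ref{l:sl-tunnel}, which the paper's proof of this particular theorem does not need, since the tunnels from $(\A_N,\Lip_N)$ to $(\alg{F},\mathsf{Q})$ are built purely from the tail-supremum operation.
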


\begin{proof}
  We maintain the notations  introduced  above. We have seen that $(\alg{F},\mathsf{Q})$ is a $C$-strongly Leibniz {\qcms}.

  For each $N\in \N$, let
  \begin{equation*}
    \alg{S}_N = \left\{ (d_n)_{n\geq N} \in \prod_{n\geq N} \D_n : \forall n \geq N \quad \rho_n(d_n) = \pi_{n+1}(d_{n+1}), \sup_{n\geq N}\norm{d_n}{\D_n} < \infty \right\}\text.
  \end{equation*}
  We also let $\Pi_N((d_n)_{n\geq N}) = \pi_N(d_N) \in \A_N$ for all $(d_n)_{n\geq N} \in \alg{S}_N$.
  The C*-algebra $\alg{F}$ is naturally *-isomorphic to
  \begin{equation*}
    \bigslant{\alg{S}_N}{\{(d_n)_{n\geq N}\in \alg{S}_N : \lim_{n\rightarrow\infty} d_n = 0 \}}\text;
  \end{equation*}
  we let $q_N : \alg{S}_N \twoheadrightarrow \alg{F}$ be the associated canonical surjection.

  We also let $\mathsf{S}_N : (d_n)_{n\geq N} \mapsto \sup_{n\geq N} \TLip_n(d_n)$ (allowing the value $\infty$). By \cite{Latremoliere13b}, $(\alg{S}_N,\mathsf{S}_N)$ is a {\qcms}.

  In \cite{Latremoliere13b}, the fourth author proved that
  \begin{equation*}
    \forall a \in \sa{\alg{F}} \quad \mathsf{Q}(a) = \inf\left\{ \mathsf{S}_N(d) : d \in \sa{\alg{S}_N}, q_N(d) = a \right\} 
  \end{equation*}
  and $\Pi_N$ is a quantum isometry to $(\A_N,\Lip_N)$. In other words, $(\alg{S}_N,\mathsf{S}_N,\Pi_N,q_N)$ is a tunnel from $(\A_N,\Lip_N)$ to $(\alg{F},\mathsf{Q})$  with extent at most $\frac{1}{2^N}$.

  It suffices to prove that $\mathsf{S}_N$ is $C$-strongly Leibniz. This is   immediate by definition: if $d=(d_n)_{n\geq N} \in \mathrm{GL}(\alg{S}_N)\cap\dom{\mathsf{S}_N}$, then 
  \begin{equation*}
    \TLip_n(d_n^{-1}) \leq C \norm{d_n^{-1}}{\D_n}^2 \TLip_n(d_n)
  \end{equation*}
  for all $n\geq N$, 
  and thus
  \begin{equation*}
    \mathsf{S}_N(d^{-1}) \leq C \norm{d^{-1}}{\alg{S}_N}^2 \mathsf{S}_N(d) \text.
  \end{equation*}
  This completes our proof.
\end{proof}

\section{Inductive limits of strongly Leibniz quantum compact metric spaces}\label{s:af}

In \cite[Section 2]{Aguilar18},  the first author constructed quantum compact metric spaces on inductive limits such that the given inductive sequence converged to the inductive limit in propinquity. In this section, we specialize these results   to strongly Leibniz   quantum compact metric spaces using the results of the previous section. As a main application of this section and article, we find strongly Leibniz L-seminorms on AF-algebras  that allow for explicit estimates in the strongly Leibniz propinquity and convergence of the Effros--Shen algebras now in the class of strongly Leibniz compact quantum metric spaces.

 \begin{theorem}\label{t:strongly-ind-lim}
 Fix $C \geq 1$. Let $\A=\overline{\cup_{n \in \N} \A_n}^{\|\cdot\|_\A}$ be a unital C*-algebra such that $(\A_n)_{n \in \N}$ is a non-decreasing sequence of unital C*-algebras of $\A$. Assume that $(\A_n, \Lip_n)_{n \in \N}$ is a $C$-strongly Leibniz quantum compact metric space for all $n \in \N$. Let $(\beta(j))_{j \in \N}$ be a summable sequence  in $(0,\infty)$.
 
 If for all $n \in \N$
 \begin{enumerate}
 \item   $\Lip_{n+1}(a) \leq \Lip_n(a)$ for all $a \in \A_n$, and 
 
 \item   for all $a \in \A_{n+1}, \Lip_{n+1}(a)\leq 1$, there exists $b \in \A_n, \Lip_n(b) \leq 1$ such that 
 \[
 \|a-b\|_\A<\beta(n),
 \] 
 \end{enumerate}
 then there exists a $C$-strongly Leibniz seminorm   $\Lip$ on $\A$ such that $(\A, \Lip)$ is a quantum compact metric space where
 \[
  \dpropinquity{SL_C}((\A_n,\Lip_n),(\A,\Lip))\leq 4 \sum_{j=n}^\infty \beta(j)
 \]
 for all $n \in \N$, and thus
 \[
 \lim_{n \to \infty} \dpropinquity{SL_C}((\A_n,\Lip_n),(\A,\Lip))=0.
 \]
  \end{theorem}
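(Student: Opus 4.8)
The plan is to treat this theorem as the $\mathcal{SL}_C$-refinement of \cite[Section~2]{Aguilar18}, importing from that reference everything that does not see the strongly Leibniz condition and then supplying the two strongly Leibniz ingredients furnished by Section~\ref{s:s-leibniz-complete}. Concretely, I would take as given from \cite{Aguilar18} the seminorm $\Lip$ on $\A$, the fact that $(\A,\Lip)$ is a quantum compact metric space, the connecting tunnels $\tau_n$ from $(\A_n,\Lip_n)$ to $(\A_{n+1},\Lip_{n+1})$ built from hypotheses (1)--(2) with $\tunnelextent{\tau_n}$ of order $\beta(n)$ (hence summable), and the estimate $\dpropinquity{}((\A_n,\Lip_n),(\A,\Lip))\leq 4\sum_{j\geq n}\beta(j)$ realized by explicit tunnels to $(\A,\Lip)$. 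The only work left is to show all of this survives the passage to the strongly Leibniz propinquity: that $\Lip$ is $C$-strongly Leibniz and that the relevant tunnels can be taken $C$-strongly Leibniz.

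First I would observe that the tunnels used in \cite{Aguilar18} --- both the connecting tunnels $\tau_n$ and those realizing the rate to $(\A,\Lip)$ --- are exactly of the max/direct-sum shape governed by Lemma~\ref{l:sl-tunnel}: the underlying quantum compact metric space is a direct sum carrying a seminorm $\TLip$ of the form appearing in that lemma, with the comparison made inside $\A$ via inclusions, and hypotheses (1)--(2) are precisely what make the two coordinate projections quantum isometries. Since every $(\A_n,\Lip_n)$ lies in $\mathcal{SL}_C$, Lemma~\ref{l:sl-tunnel} shows the corresponding $\TLip$ is $C$-strongly Leibniz, so each $\tau_n$ is a $C$-strongly Leibniz tunnel; the rate-realizing tunnels will be $C$-strongly Leibniz by the same lemma once we know $(\A,\Lip)\in\mathcal{SL}_C$.

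Next I would pin down that $\Lip$ is $C$-strongly Leibniz. Because the $\tau_n$ are $C$-strongly Leibniz and $\sum_{n}\tunnelextent{\tau_n}<\infty$, the sequence $(\A_n,\Lip_n)$ is Cauchy for $\dpropinquity{SL_C}$; by the completeness theorem of Section~\ref{s:s-leibniz-complete} it converges in $\dpropinquity{SL_C}$ to a limit in $\mathcal{SL}_C$. By inequality~(\ref{sl-prop-prop-ineq}) that limit is also the $\dpropinquity{}$-limit, hence fully quantum isometric to $(\A,\Lip)$; since a full quantum isometry is a $*$-isomorphism intertwining the L-seminorms, it transports the $C$-strong Leibniz inequality, so $(\A,\Lip)\in\mathcal{SL}_C$. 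Alternatively, one can argue directly as in Lemma~\ref{l:complete-sl-lip}, representing $a\in\dom{\Lip}\cap\mathrm{GL}(\A)$ by an approximating sequence in the $\A_n$ and using the resolvent identity together with the $C$-strong Leibniz property of the $\Lip_n$ to bound $\Lip(a^{-1})$ by $C\norm{a^{-1}}{\A}^2\Lip(a)$.

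Finally, with $(\A,\Lip)\in\mathcal{SL}_C$ in hand, the rate-realizing tunnels from $(\A_n,\Lip_n)$ to $(\A,\Lip)$ are $C$-strongly Leibniz by Lemma~\ref{l:sl-tunnel}, so they witness the same extent bound for the strongly Leibniz propinquity, giving $\dpropinquity{SL_C}((\A_n,\Lip_n),(\A,\Lip))\leq 4\sum_{j\geq n}\beta(j)$, which tends to $0$ by summability of $\beta$. I expect the genuine content to be the strongly Leibniz transfer to the limit seminorm $\Lip$ in the third step; everything else is the bookkeeping of verifying that the tunnels of \cite{Aguilar18} fall into the class to which Lemma~\ref{l:sl-tunnel} applies and then re-reading the extent estimates of \cite{Aguilar18} inside $\mathcal{SL}_C$.
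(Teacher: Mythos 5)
Your architecture matches the paper's own proof, which is exactly: the connecting tunnels of \cite[Theorem 2.15]{Aguilar18} are $C$-strongly Leibniz by Lemma~\ref{l:sl-tunnel}, and the rest follows from \cite[Theorem 2.15]{Aguilar18} together with Lemma~\ref{l:complete-sl-lip}. Your treatment of the connecting tunnels is the paper's, and your route to showing $\Lip$ is $C$-strongly Leibniz (Cauchy in $\dpropinquity{SL_C}$ by summability of the extents, completeness of $\dpropinquity{SL_C}$, uniqueness of $\dpropinquity{}$-limits up to full quantum isometry, and transport of the strong Leibniz inequality through a full quantum isometry) is a valid, slightly more roundabout variant of the paper's direct appeal to Lemma~\ref{l:complete-sl-lip}.

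The genuine gap is your final step. Lemma~\ref{l:sl-tunnel} cannot deliver the tunnels from $(\A_n,\Lip_n)$ to $(\A,\Lip)$ that witness the bound $4\sum_{j\geq n}\beta(j)$: that lemma requires two \emph{surjective} quantum isometries onto a common $C$-strongly Leibniz quotient $(\D,\Lip_\D)$, and the inclusion $\A_n\hookrightarrow\A$ is not one; more to the point, the tunnels that actually realize this extent bound in \cite{Aguilar18} (and in this paper) are not of direct-sum shape at all --- they are the sequence-algebra tunnels $(\alg{S}_N,\mathsf{S}_N,\Pi_N,q_N)$ of the completeness construction, where $\alg{S}_N$ consists of compatible bounded sequences in $\prod_{n\geq N}\D_n$ and $\mathsf{S}_N=\sup_{n\geq N}\TLip_n(\cdot)$. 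Their strong Leibniz property does not come from Lemma~\ref{l:sl-tunnel} but from the observation closing the paper's completeness theorem: inversion in $\alg{S}_N$ is computed coordinatewise, so a supremum of $C$-strongly Leibniz seminorms satisfies $\mathsf{S}_N(d^{-1})\leq C\norm{d^{-1}}{\alg{S}_N}^2\mathsf{S}_N(d)$. You also cannot sidestep this via your completeness argument, since completeness plus uniqueness of limits yields qualitative convergence and $(\A,\Lip)\in\mathcal{SL}_C$ but no rate; and to turn a direct sum $\A_n\oplus\A$ with seminorm $\max\left\{\Lip_n,\Lip,\tfrac{1}{\varepsilon}\norm{\cdot-\cdot}{\A}\right\}$ into a tunnel of controlled extent you would additionally need two-sided norm-approximation between the $\Lip$-unit ball of $\A$ and the $\Lip_n$-unit balls of $\A_n$, which is not among the facts you imported. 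The repair is small and lies in Section~\ref{s:s-leibniz-complete}: use Lemma~\ref{l:complete-sl-lip} for $\Lip$ and the $\mathsf{S}_N$ sup-argument for the rate-realizing tunnels.
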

  \begin{proof}
  Since the tunnels of (1) from    \cite[Theorem 2.15]{Aguilar18} are $C$-strongly Leibniz by Lemma \ref{l:sl-tunnel}, this result follows immediately from \cite[Theorem 2.15]{Aguilar18} and  Lemma \ref{l:complete-sl-lip}.
  \end{proof}

We turn our attention to the AF setting.

\begin{definition}[\! {\cite[Definition 1.5.9 and Tomiyama's  Theorem 1.5.10]{Brown-Ozawa}}]\label{d:cond-exp}
Let $\A$ be a unital C*-algebra and let $\B\subseteq \A$ be a unital C*-subalgebra. A linear map $E : \A \rightarrow \B$ is a {\em conditional expectation} if $E(b)=b$ for all $b \in \B$, $\|E(a)\|_\A\leq \|a\|_\A$ for all $a \in \A$, and $E(bab')=bE(a)b'$ for all $a\in \A, b,b'\in \B$.

A conditional expectation is {\em faithful} if $E(a^*a)=0$ implies $a=0$.
\end{definition}

\begin{theorem-definition}[\! {\cite[Section 5]{Rieffel12}} and \cite{Rieffel74}]\label{td:fr-norms}
Let $\A$ be a unital C*-algebra and let $\B\subseteq \A$ be a unital C*-subalgebra. If $E: \A \rightarrow \B$ is a faithful conditional expectation, then  
\[
\|a\|_E=\sqrt{\|E(a^*a)\|_\A}
\]
defines a norm on $\A$ called the {\em Frobenius--Rieffel norm associated to $E$}. 
\end{theorem-definition}
A quick application of the C*-identity shows that $\|\cdot\|_E \leq \|\cdot \|_\A$. We now place strongly Leibniz L-seminorms on all unital AF-algebras equipped with a faithful tracial state that allow for explicit approximations from the finite-dimensional C*-subalgebras. The following construction comes from the first and last author's work in \cite{Aguilar-Latremoliere15}, where they used the C*-norms instead of the Frobenius--Rieffel norms. However, the L-seminorms of \cite{Aguilar-Latremoliere15} are only known to be quasi-Leibniz with $A=2$ and $ B=0$, and we do not know if they are strongly Leibniz for any $C \geq 1$. Yet, due to a suggestion of Rieffel (to the first author at the Fall 2016 AMS Western section at University of Denver) to replace the C*-norms with his norms from \cite{Rieffel12}, we now have the following strongly Leibniz L-seminorms that still have the desired convergence results of \cite{Aguilar-Latremoliere15}.

\begin{theorem}\label{t:main-af}
Let $\A=\overline{\cup_{n \in \N} \A_n}^{\|\cdot\|_\A}$ be a unital AF algebra equipped with a faithful tracial state $\tau$ such that $\A_0=\C1_\A$. Set $\U=(\A_n)_{n \in \N}$ and 
let $(\beta(n))_{n \in \N}$ be a summable sequence of positive real numbers. For each $n \in \N$, let 
\[
E_n^\tau : \A \rightarrow \A_n
\]
be the unique $\tau$-preserving faithful conditional expectation. For $n \in \N\setminus \{0\}$, let $\kappa_n >0$ such that 
\[
\kappa_n \|a\|_{\A}\leq \|a\|_{E_n^\tau}
\]
for all $a \in \A_{n+1}$, and set $\kappa=(\kappa_n)_{n \in \N}$. For each $n \in \N \setminus \{0\}$, let 
\[
\Lip_{\U_n,\beta}^{\tau, \kappa}(a)=\max_{m \in \{0,1,\ldots, n-1\}} \frac{\max\left\{\|a-E_m^\tau(a)\|_{E_m^\tau}, \|a^*-E_m^\tau(a^*)\|_{E_m^\tau}\right\}}{\kappa_m\beta(m)}
\]
for all $a \in \A_n$, and let $\Lip_{\U_0,\beta}^{\tau, \kappa}=0$.

Then  $(\A_n, \Lip_{\U_n,\beta}^{\tau, \kappa})$ is a strongly Leibniz quantum compact metric space (with $(A,B,C)=(1,0,1)$) for all $n \in \N$, and there exists a seminorm $\Lip_{\U, \beta}^{\tau, \kappa}$ such that $(\A, \Lip_{\U, \beta}^{\tau, \kappa})$ is a  strongly Leibniz quantum compact metric space (with $(A,B,C)=(1,0,1)$) where
 \[
  \dpropinquity{SL}((\A_n,\Lip_{\U_n,\beta}^{\tau, \kappa}),(\A,\Lip_{\U, \beta}^{\tau, \kappa}))\leq 4 \sum_{j=n}^\infty \beta(j)
 \]
 for all $n \in \N$, and thus
 \[
 \lim_{n \to \infty} \dpropinquity{SL}((\A_n,\Lip_{\U_n,\beta}^{\tau, \kappa}),(\A,\Lip_{\U, \beta}^{\tau, \kappa}))=0.
 \]
\end{theorem}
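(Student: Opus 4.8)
Throughout, abbreviate $\Lip_{\U_n,\beta}^{\tau,\kappa}$ as $\Lip_n$ and $\Lip_{\U,\beta}^{\tau,\kappa}$ as $\Lip$. The plan is to derive this theorem as a direct application of Theorem \ref{t:strongly-ind-lim} with $C=1$, since $\dpropinquity{SL_1}=\dpropinquity{SL}$ and the displayed bound and limit are exactly the conclusion of that theorem. So I would reduce everything to checking three inputs: (i) each $(\A_n,\Lip_n)$ is a $(1,0)$-quasi-Leibniz, $1$-strongly Leibniz quantum compact metric space; (ii) $\Lip_{n+1}(a)\le\Lip_n(a)$ for every $a\in\A_n$; and (iii) every $a\in\A_{n+1}$ with $\Lip_{n+1}(a)\le 1$ admits some $b\in\A_n$ with $\Lip_n(b)\le 1$ and $\norm{a-b}{\A}$ small, as required by hypothesis (2) of Theorem \ref{t:strongly-ind-lim}.

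For (i), I would first use that $\A$ is AF, so each $\A_n$ is finite dimensional. The building block $N_m\colon a\mapsto\norm{a-E_m^\tau(a)}{E_m^\tau}$ is a $(1,0)$-Leibniz and $1$-strongly Leibniz seminorm on the finite-dimensional algebra by the study of Frobenius--Rieffel seminorms in \cite{Rieffel12, Aguilar-Garcia-Kim21}; the companion seminorm $a\mapsto N_m(a^*)$ has the same constants (precomposition with $*$ reverses products but preserves the symmetric Leibniz inequality, the norm, and inversion), and multiplying by the positive scalar $\kappa_m\beta(m)$ changes nothing. Since both the Leibniz inequality (hence also the Jordan and Lie inequalities of Definition \ref{qcms-def}(3)) and the strongly Leibniz inequality are stable under taking maxima over $m$---using $\max_m(x_m+y_m)\le\max_m x_m+\max_m y_m$ and the common factor $\norm{a^{-1}}{\A}^2$ in the strongly Leibniz bound---$\Lip_n$ inherits $(A,B,C)=(1,0,1)$. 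The kernel is $\R\unit_\A$: the $m=0$ block forces $a=E_0^\tau(a)\in\A_0=\C\unit_\A$ because $\norm{\cdot}{E_0^\tau}$ is a genuine norm (Theorem-Definition \ref{td:fr-norms}). In finite dimensions this kernel condition already yields a quantum compact metric space by Rieffel's criterion, and the $\Lip_n$-unit ball is closed by continuity.

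For (ii) and (iii), I would exploit that $E_n^\tau$ fixes $\A_n$ and satisfies the tower property $E_m^\tau\circ E_n^\tau=E_m^\tau$ for $m\le n$. If $a\in\A_n$ then $E_n^\tau(a)=a$ kills the $m=n$ block of $\Lip_{n+1}$, giving the stronger statement $\Lip_{n+1}(a)=\Lip_n(a)$, which is (ii). For (iii) I would take $b=E_n^\tau(a)\in\A_n$. From $\Lip_{n+1}(a)\le 1$ the $m=n$ block gives $\norm{a-E_n^\tau(a)}{E_n^\tau}\le\kappa_n\beta(n)$, and since $a-E_n^\tau(a)\in\A_{n+1}$ the defining estimate $\kappa_n\norm{\cdot}{\A}\le\norm{\cdot}{E_n^\tau}$ produces $\norm{a-b}{\A}\le\beta(n)$, which is hypothesis (2) up to the inessential strict-versus-nonstrict distinction. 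To bound $\Lip_n(b)$, for each $m\le n-1$ the tower property gives $b-E_m^\tau(b)=E_n^\tau\bigl(a-E_m^\tau(a)\bigr)$, while the Kadison--Schwarz inequality for the $2$-positive map $E_n^\tau$ combined with $E_m^\tau\circ E_n^\tau=E_m^\tau$ gives the contraction $\norm{E_n^\tau(x)}{E_m^\tau}\le\norm{x}{E_m^\tau}$; hence each block of $\Lip_n(b)$ is dominated by the corresponding block of $\Lip_{n+1}(a)$, so $\Lip_n(b)\le\Lip_{n+1}(a)\le 1$.

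The main obstacle is input (i)---specifically, that the Frobenius--Rieffel building blocks are genuinely $(1,0)$-Leibniz and $1$-strongly Leibniz; this is precisely the content imported from Rieffel's standard-deviation seminorms \cite{Rieffel12} and their finite-dimensional analysis in \cite{Aguilar-Garcia-Kim21}, and is the reason these norms replace the C*-norms of \cite{Aguilar-Latremoliere15}. The remaining work is bookkeeping: the stability of the two Leibniz inequalities under $*$, positive scaling, and maxima, and the two short conditional-expectation computations for (ii) and (iii). With (i)--(iii) established, Theorem \ref{t:strongly-ind-lim} delivers $(\A,\Lip)$, the estimate $\dpropinquity{SL}((\A_n,\Lip_n),(\A,\Lip))\le 4\sum_{j=n}^\infty\beta(j)$, and the limit.
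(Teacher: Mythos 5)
Your proposal is correct and follows essentially the same route as the paper: reduce to Theorem \ref{t:strongly-ind-lim}, import the $(1,0)$-Leibniz and $1$-strongly Leibniz property of the finite-dimensional blocks from \cite[Proposition 5.4 and Theorem 5.5]{Rieffel12}, observe $\Lip_{n+1}=\Lip_n$ on $\A_n$, and verify hypothesis (2) with $b=E_n^\tau(a)$. Your contraction statement $\norm{E_n^\tau(x)}{E_m^\tau}\leq\norm{x}{E_m^\tau}$ via Kadison--Schwarz and the tower property $E_m^\tau\circ E_n^\tau=E_m^\tau$ is exactly the content of the paper's expanded computation (which invokes \cite[Proposition 1.5.7]{Brown-Ozawa} for $E_n^\tau(a^*a)\geq E_n^\tau(a^*)E_n^\tau(a)$), just packaged more compactly.
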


\begin{proof}
Let $n \in \N$. By construction,   for $a \in \A_n,$ $\Lip_{\U_n,\beta}^{\tau, \kappa}(a)=0$ if and only if $a=\mu 1_{\A_n}$ for some $\mu \in \C$. Hence, since $\A_n$ is finite-dimensional,  $\Lip_{\U_n,\beta}^{\tau, \kappa}$ is an L-seminorm on $\A_n$. Furthermore, $(\A_n,\Lip_{\U_n,\beta}^{\tau, \kappa})$ is a strongly Leibniz compact quantum metric space (with $A=1,B=0, C=1$) by \cite[Proposition 5.4 and Theorem 5.5]{Rieffel12}. 

By construction, for all $n \in \N$ and $a \in \A$, we have that 
\[
\Lip_{\U_{n+1},\beta}^{\tau, \kappa}(a)=\Lip_{\U_{n},\beta}^{\tau, \kappa}(a).
\]  Thus, (1) of Theorem \ref{t:strongly-ind-lim} is satisfied. 

For (2),   let $ a \in \A_{n+1}$ such that $\Lip_{\U_{n+1},\beta}^{\tau, \kappa}(a)\leq 1$. Consider $E_n^\tau(a) \in \A_n$.
We have that $\|a-E_n^\tau(a)\|_{E_{n}}\leq \kappa_n\beta(n)$. Thus, \[\kappa_n \|a-E_n^\tau(a)\|_\A \leq \|a-E_n^\tau(a)\|_{E_{n}}\leq \kappa_n\beta(n),\]  so $\|a-E_n^\tau(a)\|_\A\leq \beta(n).$ Note that if $n=0$, then $\Lip_{\U_{n},\beta}^{\tau, \kappa}(E_n^\tau(a))=0\leq 1$  and we are done, so for the rest of the proof assume that $n \geq 1$.

We show that $\Lip_{\U_{n},\beta}^{\tau, \kappa}(E_n^\tau(a))\leq 1$. Note that 
\begin{align*}
&\Lip_{\U_{n},\beta}^{\tau, \kappa}(E_n^\tau(a))\\
& \quad =\max_{m \in \{0,1,\ldots, n-1\}} \frac{\max\left\{\|E_n^\tau(a)-E_m^\tau(E_n^\tau(a))\|_{E_m^\tau}, \|E_n^\tau(a^*)-E_m^\tau(E_n^\tau(a^*))\|_{E_m^\tau}\right\}}{\kappa_m\beta(m)}.
\end{align*}
 Let $m \in \{0,1, \ldots, n\}.$ 
 Now    $E_m^\tau\circ E_n^\tau=E_m^\tau$ by  the proof of \cite[Theorem 3.5]{Aguilar-Latremoliere15}, $E_m$ is positive by \cite[Theorem 1.5.10 (Tomiyama)]{Brown-Ozawa}, and $E_m(\A) \subseteq E_n(\A)$. Thus, 
\begin{align*}
& E_m^\tau([E_n^\tau(a)-E_m^\tau(E_n^\tau(a))]^*[E_n^\tau(a)-E_m^\tau(E_n^\tau(a))])\\
 &\quad = E_m^\tau([E_n^\tau(a^*)-E_m^\tau(E_n^\tau(a^*))][E_n^\tau(a)-E_m^\tau(E_n^\tau(a))])\\
& \quad = E_m^\tau(E_n^\tau(a^*)E_n^\tau(a)-E_n^\tau(a^*)E_m^\tau(E_n^\tau(a))- E_m^\tau(E_n^\tau(a^*))E_n^\tau(a)\\
& \quad \quad \quad  +E_m^\tau(E_n^\tau(a^*))E_m^\tau(E_n^\tau(a)))\\
&\quad =  E_m^\tau(E_n^\tau(a^*)E_n^\tau(a))-E_m^\tau(E_n^\tau(a^*)E_m^\tau(E_n^\tau(a)))- E_m^\tau(E_m^\tau(E_n^\tau(a^*))E_n^\tau(a))\\
& \quad \quad \quad +E_m^\tau(E_m^\tau(E_n^\tau(a^*))E_m^\tau(E_n^\tau(a)))\\
&\quad =  E_m^\tau(E_n^\tau(a^*)E_n^\tau(a))-E_m^\tau(E_n^\tau(a^*))E_m^\tau(E_n^\tau(a))- E_m^\tau(E_n^\tau(a^*))E_m^\tau(E_n^\tau(a))\\
& \quad \quad \quad  +E_m^\tau(E_m^\tau(E_n^\tau(a^*))E_m^\tau(E_n^\tau(a)))\\
&\quad =  E_m^\tau(E_n^\tau(a^*)E_n^\tau(a))-E_m^\tau(a^*)E_m^\tau(a)- E_m^\tau(a^*)E_m^\tau(a)\\
& \quad \quad \quad  +E_m^\tau(E_m^\tau(E_n^\tau(a^*))E_m^\tau(E_n^\tau(a)))\\
& \quad =  E_m^\tau(E_n^\tau(a^*)E_n^\tau(a))-2E_m^\tau(a^*)E_m^\tau(a) +E_m^\tau(E_m^\tau(E_n^\tau(a^*))E_m^\tau(E_n^\tau(a)))\\
&\quad  =  E_m^\tau(E_n^\tau(a^*)E_n^\tau(a))-2E_m^\tau(a^*)E_m^\tau(a) + E_m^\tau(a^*)E_m^\tau(a).
\end{align*}

Similarly, 
\begin{align*}
  &E_m^\tau((a-E_m^\tau(a))^*(a-E_m^\tau(a))) \\
  &\quad = E_m^\tau(a^*a)-2E_m^\tau(a^*)E_m^\tau(a) +E_m^\tau(a^*)E_m^\tau(a)\\
& \quad  =E_m^\tau(E_n^\tau(a^*a))-2E_m^\tau(a^*)E_m^\tau(a) +E_m^\tau(a^*)E_m^\tau(a).
\end{align*}

By \cite[Proposition 1.5.7]{Brown-Ozawa},   $E_n^\tau(a^*a)-E^\tau_n(a^*)E^\tau_n(a)\geq 0$  and hence 
\[
E_m^\tau(E_n^\tau(a^*a))-  E_m^\tau (E^\tau_n(a^*)E^\tau_n(a)) \geq 0
\]
since $E_m^\tau$ is a conditional expectation. 
Thus, 
\begin{align*}&E_m^\tau([E_n^\tau(a)-E_m^\tau(E_n^\tau(a))]^*[E_n^\tau(a)-E_m^\tau(E_n^\tau(a))])\\ & \quad \quad \quad \leq E_m^\tau((a-E_m^\tau(a))^*(a-E_m^\tau(a))).\end{align*}  Since $E_m^\tau([E_n^\tau(a)-E_m^\tau(E_n^\tau(a))]^*[E_n^\tau(a)-E_m^\tau(E_n^\tau(a))]) \geq 0$, we gather
\begin{align*}
 \|E_n^\tau(a)-E_m^\tau(E_n^\tau(a))\|_{E_m^\tau}^2 
&   =\|E_m^\tau([E_n^\tau(a)-E_m^\tau(E_n^\tau(a))]^*[E_n^\tau(a)-E_m^\tau(E_n^\tau(a))])\|_{\A}\\
&   \leq  \|E_m^\tau((a-E_m^\tau(a))^*(a-E_m^\tau(a)))\|_{\A}\\
&   = \|a-E_m^\tau(a)\|_{E_m^\tau}^2.
\end{align*}
Therefore, repeating this process with $a^*$ in place of $a$, we conclude that 
\[
\Lip_{\U_{n},\beta}^{\tau, \kappa}(E_n^\tau(a))\leq \Lip_{\U_{n+1},\beta}^{\tau, \kappa}(a)\leq 1.
\]
The proof is complete by Theorem \ref{t:strongly-ind-lim}.
\end{proof}

In \cite{Aguilar-Garcia-Kim21}, some equivalence constants $\kappa_n$ were obtained explicitly on direct sums of matrix algebras. The next results ensure that  equivalence constants obtained in this way translate to the inductive limit. 

\begin{proposition}\label{p:equiv-const}

Let $(\A_n, \alpha_n)_{n \in \N}$ be an inductive sequence of C*-algebras (see \cite[Section 6.1]{Murphy90}) such that:
\begin{enumerate} 
\item $\A_0=\C$ and $\A_n=\bigoplus_{k=1}^{n_n} \M_{d_{n,k}}(\C)$ for all $n \in \N\setminus \{0\}$, where $d_{n,k} \in \N \setminus \{0\}$ for each $n \in \N \setminus \{0\}$ and $k \in \{1, 2, \ldots, n_n\}$;
\item   $\alpha_n :\A_n \rightarrow \A_{n+1}$ is a unital *-monomorphism for all $n \in \N$;
\item   the inductive limit $\A=\underrightarrow{\lim} \ (\A_n, \alpha_n)_{n \in \N}$ is equipped with a faithful tracial state $\tau$.
  \end{enumerate}
  For each $n \in \N$, let $\alpha^{(n+1)}, \alpha^{(n)} : \A_n \rightarrow \A$ be the canonical unital *-monomorphisms satisfying 
  \begin{equation}\label{eq:ind-lim-maps}
  \alpha^{(n+1)}\circ \alpha_n=\alpha^{(n)}.
  \end{equation}
  Note that $\A=\overline{\cup_{n \in \N}\alpha^{(n)}(\A_n)}^{\|\cdot\|_\A}$ and $\alpha^{(n)}(\A_n) \subseteq \alpha^{(n+1)}(\A_{n+1})$ and $\alpha^{(0)}(\A_0)=\C1_\A$ (see \cite[Section 6.1]{Murphy90}). For each $n \in \N$, let \[E^\tau_n :\A \rightarrow \alpha^{(n)}(\A_n)\]
  denote the unique $\tau$-preserving faithful conditional expectation onto $\alpha^{(n)}(\A_n).$ For each $n \in \N$, let
\begin{equation}\label{eq:fd-trace}
\tau_n=\tau \circ \alpha^{(n)},
\end{equation}
which is a faithful tracial state on  $\A_n$, and let \[E^{\tau_{n+1}}_{n+1,n}: \A_{n+1} \rightarrow \alpha_n(\A_n)\]
be the unique $\tau_{n+1}$-preserving faithful conditional expectation onto $\alpha_n(\A_n)$. Let $\kappa_n >0$ such that 
\[
\kappa_n \|a\|_{\A_{n+1}} \leq \|a\|_{E^{\tau_{n+1}}_{n+1,n}}
\]
for all $a \in \A_{n+1}$.

Then, for all $n \in \N$,   
\[
E^\tau_n \circ \alpha^{(n+1)}=\alpha^{(n+1)}\circ E^{\tau_{n+1}}_{n+1,n},
\]
and moreover,
\[
\kappa_n \|a\|_\A \leq \|a\|_{E^\tau_n}
\]
for all $a \in \alpha^{(n+1)}(\A_{n+1})$.
\end{proposition}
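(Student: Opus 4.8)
The plan is to establish the intertwining identity first and then read off the norm estimate as a short formal consequence. Throughout, write $\phi = \alpha^{(n+1)} \colon \A_{n+1} \to \A$. Since $\phi$ is an injective $*$-homomorphism it is isometric, and by \eqref{eq:fd-trace} it is trace preserving, $\tau \circ \phi = \tau_{n+1}$. Its image $\phi(\A_{n+1}) = \alpha^{(n+1)}(\A_{n+1})$ is a finite-dimensional unital C*-subalgebra of $\A$, and by \eqref{eq:ind-lim-maps} it contains $\alpha^{(n)}(\A_n) = \phi(\alpha_n(\A_n))$. Thus $\phi$ restricts to a trace-preserving $*$-isomorphism of $(\A_{n+1},\tau_{n+1})$ onto $(\phi(\A_{n+1}), \tau|_{\phi(\A_{n+1})})$ carrying the subalgebra $\alpha_n(\A_n)$ onto $\alpha^{(n)}(\A_n)$.

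The key tool is the uniqueness of trace-preserving conditional expectations in this finite-dimensional setting: any $\tau$-preserving conditional expectation $E$ of $\phi(\A_{n+1})$ onto $\alpha^{(n)}(\A_n)$ satisfies $\tau(E(x)y) = \tau(xy)$ for all $x \in \phi(\A_{n+1})$ and $y \in \alpha^{(n)}(\A_n)$, and since $\tau$ restricts to a faithful, hence nondegenerate, trace on the finite-dimensional algebra $\alpha^{(n)}(\A_n)$, this pins down $E(x) \in \alpha^{(n)}(\A_n)$ uniquely; so such an $E$ is unique. I would apply this as follows. First I would check that the restriction $E_n^\tau|_{\phi(\A_{n+1})}$ is a $\tau$-preserving conditional expectation of $\phi(\A_{n+1})$ onto $\alpha^{(n)}(\A_n)$: it takes values in $\alpha^{(n)}(\A_n)$ since $E_n^\tau$ does, it fixes $\alpha^{(n)}(\A_n)$ pointwise, it is contractive, the $\alpha^{(n)}(\A_n)$-bimodule property is inherited from $E_n^\tau$, and trace preservation survives restriction. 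Second, I would check that $\phi \circ E^{\tau_{n+1}}_{n+1,n} \circ \phi^{-1}$ is also a $\tau$-preserving conditional expectation of $\phi(\A_{n+1})$ onto $\alpha^{(n)}(\A_n)$; this is immediate from transport of structure through the isomorphism $\phi$, using $\tau \circ \phi = \tau_{n+1}$ and $\phi(\alpha_n(\A_n)) = \alpha^{(n)}(\A_n)$. By uniqueness the two maps coincide, and composing with $\phi$ on the right yields $E_n^\tau \circ \alpha^{(n+1)} = \alpha^{(n+1)} \circ E^{\tau_{n+1}}_{n+1,n}$.

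For the norm estimate I would take $a \in \A_{n+1}$ and set $x = \alpha^{(n+1)}(a)$, noting that every element of $\alpha^{(n+1)}(\A_{n+1})$ arises this way. Since $\alpha^{(n+1)}$ is a $*$-homomorphism, $x^*x = \alpha^{(n+1)}(a^*a)$, and the intertwining identity applied to $a^*a$ gives $E_n^\tau(x^*x) = \alpha^{(n+1)}(E^{\tau_{n+1}}_{n+1,n}(a^*a))$. Because $\alpha^{(n+1)}$ is isometric, $\|E_n^\tau(x^*x)\|_\A = \|E^{\tau_{n+1}}_{n+1,n}(a^*a)\|_{\A_{n+1}}$ and $\|x\|_\A = \|a\|_{\A_{n+1}}$. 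Taking square roots in the definition of the Frobenius--Rieffel norms yields $\|x\|_{E_n^\tau} = \|a\|_{E^{\tau_{n+1}}_{n+1,n}}$, so the hypothesis $\kappa_n\|a\|_{\A_{n+1}} \leq \|a\|_{E^{\tau_{n+1}}_{n+1,n}}$ translates directly into $\kappa_n\|x\|_\A \leq \|x\|_{E_n^\tau}$.

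I expect the main obstacle to be the first step, namely setting up and correctly invoking uniqueness of the trace-preserving conditional expectation: one must verify carefully that the restriction $E_n^\tau|_{\phi(\A_{n+1})}$ is genuinely a conditional expectation \emph{onto} $\alpha^{(n)}(\A_n)$ (so that uniqueness applies there) and that the transported map $\phi \circ E^{\tau_{n+1}}_{n+1,n} \circ \phi^{-1}$ has target exactly $\alpha^{(n)}(\A_n)$. Once the intertwining identity is established, the norm comparison is purely formal and follows from the isometry of the connecting $*$-monomorphisms.
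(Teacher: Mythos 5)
Your proof is correct, and its first half takes a genuinely different route from the paper's. The paper proves the intertwining identity $E^\tau_n \circ \alpha^{(n+1)} = \alpha^{(n+1)} \circ E^{\tau_{n+1}}_{n+1,n}$ by brute force: it invokes the explicit matrix-unit formula for trace-preserving conditional expectations from \cite[Expression (4.1)]{Aguilar-Latremoliere15}, writes $E^\tau_n(\alpha^{(n+1)}(a))$ as a sum over the matrix units $e$ of $\A_n$ with coefficients $\tau(\alpha^{(n+1)}(a)\alpha^{(n)}(e^*))/\tau(\alpha^{(n)}(e^*)\alpha^{(n)}(e))$, and then pulls $\alpha^{(n+1)}$ out of the sum using $\alpha^{(n+1)}\circ\alpha_n=\alpha^{(n)}$ and $\tau\circ\alpha^{(n+1)}=\tau_{n+1}$ to recognize $\alpha^{(n+1)}\bigl(E^{\tau_{n+1}}_{n+1,n}(a)\bigr)$. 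You instead argue by transport of structure: both $E^\tau_n$ restricted to $\alpha^{(n+1)}(\A_{n+1})$ and the conjugated map $\alpha^{(n+1)}\circ E^{\tau_{n+1}}_{n+1,n}\circ(\alpha^{(n+1)})^{-1}$ are $\tau$-preserving conditional expectations of $\alpha^{(n+1)}(\A_{n+1})$ onto $\alpha^{(n)}(\A_n)$, and such an expectation is unique because the bimodule property plus trace preservation force $\tau(E(x)y)=\tau(xy)$ for all $y$ in the range algebra, which determines $E(x)$ by faithfulness of $\tau$. One point deserves emphasis: this uniqueness instance (expectations defined on $\alpha^{(n+1)}(\A_{n+1})$, not on $\A$ or $\A_{n+1}$) is not among the two granted in the statement, so the pairing argument you sketch is genuinely needed, and you supply it correctly. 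Your approach buys generality and economy: it nowhere uses the direct-sum-of-matrix-algebras structure, only faithfulness of $\tau$ and injectivity of the connecting maps, so it would apply verbatim to more general inductive systems; the paper's computation, on the other hand, keeps the explicit formula for the expectation in play, which it reuses elsewhere (e.g., in the proof of Proposition \ref{p:cond-exp-conv}). Your second half --- applying the identity to $a^*a$ and using that the *-monomorphism $\alpha^{(n+1)}$ is isometric to convert $\|\cdot\|_{E^{\tau_{n+1}}_{n+1,n}}$ into $\|\cdot\|_{E^\tau_n}$ --- is the same argument as the paper's.
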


\begin{proof}
Let  $n \in \N$ and let $B_n$ denote the set of matrix units for $\A_n$.   By  \cite[Expression (4.1)]{Aguilar-Latremoliere15} for both conditional expectations $E^\tau_n$ and $E^{\tau_{n+1}}_{n+1,n}$, we have  for all $a \in \A_{n+1}$
\begin{align*}
E^\tau_n(\alpha^{(n+1)}(a))& = \sum_{e \in B_n}\frac{\tau(\alpha^{(n+1)}(a) \alpha^{(n)}(e^*))}{\tau(\alpha^{(n)}(e^*) \alpha^{(n)}(e))} \alpha^{(n)}(e)\\
& = \sum_{e \in B_n}\frac{\tau(\alpha^{(n+1)}(a) \alpha^{(n)}(e^*))}{\tau(\alpha^{(n)}(e^*) \alpha^{(n)}(e))} \alpha^{(n+1)}(\alpha_n(e))\\
& = \alpha^{(n+1)}\left( \sum_{e \in B_n}\frac{\tau(\alpha^{(n+1)}(a) \alpha^{(n)}(e^*))}{\tau(\alpha^{(n)}(e^*) \alpha^{(n)}(e))}  \alpha_n(e) \right)\\
& = \alpha^{(n+1)}\left( \sum_{e \in B_n}\frac{\tau(\alpha^{(n+1)}(a) \alpha^{(n+1)}(\alpha_n(e^*)))}{\tau(\alpha^{(n+1)}(\alpha_{n}(e^*) \alpha_{n}(e))}  \alpha_n(e) \right)\\
& = \alpha^{(n+1)}\left( \sum_{e \in B_n}\frac{\tau_{n+1} (a \alpha_n(e^*))}{\tau_{n+1}(\alpha_{n}(e^*) \alpha_{n}(e))} \alpha_n(e)\right)\\
& = \alpha^{(n+1)}(E^{\tau_{n+1}}_{n+1,n}(a))
\end{align*}
and  so
\begin{align*}
E^\tau_n \circ \alpha^{(n+1)}=\alpha^{(n+1)}\circ E^{\tau_{n+1}}_{n+1,n}.
\end{align*}

Let $a \in \alpha^{(n+1)}(\A_{n+1})$. Thus, there exists a unique $a_{n+1} \in \A_{n+1}$ such that $a=\alpha^{(n+1)}(a_{n+1})$. We have 
\begin{align*}
\|a\|_{E^\tau_n}^2&=\left\|E^\tau_n\left(\alpha^{(n+1)}(a_{n+1})^*\alpha^{(n+1)}(a_{n+1})\right)\right\|_\A\\
& = \left\|E^\tau_n\left(\alpha^{(n+1)}(a_{n+1}^*a_{n+1})\right)\right\|_\A\\
& = \left\| \alpha^{(n+1)}\left(E^{\tau_{n+1}}_{n+1,n}(a_{n+1}^*a_{n+1})\right)\right\|_\A\\
& = \left\|  E^{\tau_{n+1}}_{n+1,n}(a_{n+1}^*a_{n+1}) \right\|_{\A_{n+1}}\\
& \geq \kappa_n^2 \|a_{n+1}\|_{\A_{n+1}}^2\\
& = \kappa_n^2 \|a\|_\A^2.
\end{align*}
Therefore, 
\[
 \kappa_n \|a\|_\A\leq \|a\|_{E^\tau_n}
\]
as desired.
\end{proof}

 The convergence of the Effros--Shen algebras  in \cite{Aguilar-Latremoliere15} relied on a continuous field of L-seminorms on the finite-dimensional subalgebras of the inductive sequence and, although the L-seminorms of Theorem \ref{t:main-af} have a similar structure to those of \cite[Theorem 3.5]{Aguilar-Latremoliere15}, we need two important facts to ensure that the L-seminorms of Theorem \ref{t:main-af} also form a continuous field of L-seminorms in an appropriate sense. The first,  Proposition \ref{p:cond-exp-conv},    takes care of the fact that we switch the C*-norm for Frobenius--Rieffel norms  and the   second fact takes care of the continuity of the equivalence constants, which was already proven in \cite[Theorem 5.2]{Aguilar-Garcia-Kim21} for particular equivalence constants that were calculated explicitly.
 
 \begin{proposition}\label{p:cond-exp-conv}
 Let $N \in \N, n_1, n_2, \ldots, n_N \in \N \setminus \{0\}$, and   $\A=\oplus_{k=1}^N \M_{n_k}(\C)$. Let $M \in \N \setminus \{0\}, m_1, m_2, \ldots, m_M \in \N \setminus \{0\}$, and   $\B=\oplus_{k=1}^M \M_{m_k}(\C)$. Assume that there exists a unital *-monomorphism $\alpha: \B \rightarrow \A$.  For each $n \in \N \cup \{\infty\}$, let $\mathbf{v}^n \in (0,1)^N$ and let $\tau_{\mathbf{v}^n}$ be the faithful tracial state defined for all  $a=(a_1, a_2, \ldots, a_N) \in \A$  by 
 \[
 \tau_{\mathbf{v}^n}(a)=\sum_{k=1}^N \frac{v^n_k}{n_k} \mathrm{Tr} (a_k),
 \] 
 where $ \mathrm{Tr}$ denotes the trace of a matrix. 
 Let $E^{\tau_{\mathbf{v}^n}}: \A \rightarrow \alpha(\B)$ denote the unique $\tau_{\mathbf{v}^n}$-preserving faithful conditional expectation onto $\alpha(\B)$.

 If $(\mathbf{v}^n)_{n \in \N}$ converges to $\mathbf{v}^\infty$ coordinate-wise, then $(\|\cdot\|_{E^{\tau_{\mathbf{v}^n}}})_{n \in \N}$ converges to $\|\cdot\|_{E^{\tau_{\mathbf{v}^\infty}}}$ uniformly on any compact set of $(\A, \|\cdot\|_\A)$ and thus converges pointwise on $\A$.
 \end{proposition}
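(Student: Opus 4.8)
The plan is to reduce the statement to the continuous dependence of the conditional expectation $E^{\tau_{\mathbf{v}}}$, viewed as a linear endomorphism of the finite-dimensional space $\A$, on the parameter $\mathbf{v}$, and then to transfer that continuity to the Frobenius--Rieffel norms $\norm{\cdot}{E^{\tau_{\mathbf{v}}}} = \sqrt{\norm{E^{\tau_{\mathbf{v}}}(\cdot^{\,*}\cdot)}{\A}}$. First I would recall the explicit description of $E^{\tau_{\mathbf{v}}}$ already exploited in the proof of Proposition \ref{p:equiv-const}. Let $B$ be a set of matrix units for $\B$, so that $\{\alpha(e) : e \in B\}$ is an orthogonal basis of $\alpha(\B)$ for the inner product $(x,y) \mapsto \tau_{\mathbf{v}}(y^* x)$. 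By \cite[Expression (4.1)]{Aguilar-Latremoliere15},
\[
E^{\tau_{\mathbf{v}}}(a) = \sum_{e \in B} \frac{\tau_{\mathbf{v}}(\alpha(e^*) a)}{\tau_{\mathbf{v}}(\alpha(e^* e))} \, \alpha(e) \qquad (a \in \A).
\]
For each fixed $e$ and $a$, the numerator $\mathbf{v} \mapsto \tau_{\mathbf{v}}(\alpha(e^*) a)$ is linear in $\mathbf{v}$ by the defining formula of $\tau_{\mathbf{v}}$, and the denominator $\mathbf{v} \mapsto \tau_{\mathbf{v}}(\alpha(e^* e))$ is likewise linear and, since $\alpha(e^* e) = \alpha(e)^*\alpha(e)$ is a nonzero positive element and $\tau_{\mathbf{v}}$ is faithful for $\mathbf{v} \in (0,1)^N$, strictly positive.

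Next I would promote coordinate-wise convergence to convergence in operator norm of these maps. Because $\mathbf{v}^\infty \in (0,1)^N$ and $\mathbf{v}^n \to \mathbf{v}^\infty$, the denominators $\tau_{\mathbf{v}^n}(\alpha(e^* e))$ converge to the strictly positive number $\tau_{\mathbf{v}^\infty}(\alpha(e^* e))$ and are therefore eventually bounded away from $0$; combined with the linearity of the numerators in $\mathbf{v}$, each coefficient functional $a \mapsto \tau_{\mathbf{v}^n}(\alpha(e^*) a)/\tau_{\mathbf{v}^n}(\alpha(e^* e))$ converges to its $\mathbf{v}^\infty$-analogue in the dual norm. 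As $B$ is finite, summing the finitely many terms yields
\[
\lim_{n \to \infty} \opnorm{E^{\tau_{\mathbf{v}^n}} - E^{\tau_{\mathbf{v}^\infty}}} = 0,
\]
where $\opnorm{\cdot}$ denotes the operator norm of endomorphisms of $(\A, \norm{\cdot}{\A})$.

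Finally I would transfer this to the norms themselves. For $a \in \A$, using $\left|\sqrt{x}-\sqrt{y}\right| \leq \sqrt{|x-y|}$ for $x,y \geq 0$ together with the reverse triangle inequality for $\norm{\cdot}{\A}$, one gets
\[
\left| \norm{a}{E^{\tau_{\mathbf{v}^n}}} - \norm{a}{E^{\tau_{\mathbf{v}^\infty}}} \right| \leq \sqrt{\norm{(E^{\tau_{\mathbf{v}^n}} - E^{\tau_{\mathbf{v}^\infty}})(a^* a)}{\A}} \leq \sqrt{\opnorm{E^{\tau_{\mathbf{v}^n}} - E^{\tau_{\mathbf{v}^\infty}}}}\;\norm{a}{\A}.
\]
Given a compact set $K \subseteq (\A,\norm{\cdot}{\A})$, the quantity $\norm{a}{\A}$ is bounded on $K$, so taking the supremum over $a \in K$ and letting $n \to \infty$ gives uniform convergence on $K$; pointwise convergence is the special case of singletons.

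I expect the only genuine point requiring care to be the control of the denominators $\tau_{\mathbf{v}^n}(\alpha(e^* e))$: one must use that the limiting parameter $\mathbf{v}^\infty$ still lies in $(0,1)^N$ (so that the limiting trace remains faithful) to keep these quantities uniformly bounded away from $0$, which is exactly what makes the coefficients, and hence the conditional expectations, converge in operator norm. Everything else is routine finite-dimensional continuity together with the elementary square-root estimate.
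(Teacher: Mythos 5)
Your proof is correct, but it takes a genuinely different route from the paper's. The paper establishes only \emph{pointwise} convergence $E^{\tau_{\mathbf{v}^n}}(a) \to E^{\tau_{\mathbf{v}^\infty}}(a)$ (from the same matrix-unit formula \cite[Expression (4.1)]{Aguilar-Latremoliere15}, read as continuity in the weak*-converging traces), and then upgrades to uniform convergence on a compact set $C$ by an $\varepsilon/3$-argument: cover $C$ by finitely many $\varepsilon/3$-balls, use pointwise convergence at the finitely many centers, and use the fact that every conditional expectation is a contraction ($\norm{E(a)}{\A} \leq \norm{a}{\A}$, Definition \ref{d:cond-exp}), so the family $\{E^{\tau_{\mathbf{v}^n}}\}_{n}$ is uniformly $1$-Lipschitz; it finishes with the reverse triangle inequality and uniform continuity of the square root. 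You instead prove the stronger statement that $E^{\tau_{\mathbf{v}^n}} \to E^{\tau_{\mathbf{v}^\infty}}$ in \emph{operator norm}, which hinges on exactly the point you isolate: the denominators $\tau_{\mathbf{v}^n}(\alpha(e^*e))$ stay bounded away from $0$ because $\mathbf{v}^\infty \in (0,1)^N$ keeps $\tau_{\mathbf{v}^\infty}$ faithful; you then transfer to the norms via the quantitative estimate $\left| \norm{a}{E^{\tau_{\mathbf{v}^n}}} - \norm{a}{E^{\tau_{\mathbf{v}^\infty}}} \right| \leq \sqrt{\opnorm{E^{\tau_{\mathbf{v}^n}} - E^{\tau_{\mathbf{v}^\infty}}}}\,\norm{a}{\A}$, which is valid by the C*-identity. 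Your route buys an explicit rate and uniform convergence on all norm-bounded sets (equivalent, in finite dimensions, to the compact-set statement, since bounded sets have compact closure), and it handles cleanly a step the paper leaves tacit: the Frobenius--Rieffel norm evaluates the expectation at $a^*a$, so the paper's uniform convergence of $E^{\tau_{\mathbf{v}^n}}$ on $C$ must really be applied to the compact image of $C$ under $a \mapsto a^*a$, whereas your inequality absorbs this via $\norm{a^*a}{\A} = \norm{a}{\A}^2$. The paper's route is softer — pointwise convergence plus equicontinuity plus total boundedness — the kind of argument that would survive in settings (e.g.\ infinite-dimensional ones) where operator-norm convergence of the expectations is unavailable, while yours leans harder on finite-dimensionality (finitely many matrix units, denominators uniformly bounded below). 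Both are complete proofs.
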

\begin{proof}
Let $B$ be the set of matrix units for $\B$. Fix $ a=(a_1,a_2, \ldots, a_N) \in \A$. By \cite[Expression (4.1)]{Aguilar-Latremoliere15},   for each $n \in \N\cup \{\infty\}$,
\[
E^{\tau_{\mathbf{v}^n}}(a)=\sum_{e \in B} \frac{\tau_{\mathbf{v}^n}(a\alpha(e^*))}{\tau_{\mathbf{v}^n}(\alpha(e)\alpha(e^*))} \alpha(e).
\]
The condition that $(\mathbf{v}^n)_{n \in \N}$ converges to $\mathbf{v}^\infty$ coordinate-wise is equivalent to weak* convergence of $(\tau_{\mathbf{v}^n})_{n \in \N}$ to $\tau_{\mathbf{v}^\infty}$. Thus, by continuity of addition and scalar multiplication, $(E^{\tau_{\mathbf{v}^n}}(a))_{n \in \N}$ converges to $E^{\tau_{\mathbf{v}^\infty}}(a)$ with respect to $\|\cdot\|_\A$. 

Now, we prove uniform convergence on any compact set of $(\A, \|\cdot\|_\A)$. Let $C \subset \A$ be compact  with respect to $\|\cdot\|_\A$. Let $\varepsilon>0$. By compactness, there exist $N \in \N $ and $ a_1,a_2, \ldots, a_N \in C$ such that 
\[
C \subseteq \bigcup_{k=1}^N \{ a \in \A : \|a-a_k\|_\A<\varepsilon/3\}.
\]
By pointwise convergence, choose $N' \in \N$ such that $\|E^{\tau_{\mathbf{v}^n}}(a_k)-E^{\tau_{\mathbf{v}^\infty}}(a_k)\|_\A<\varepsilon/3$ for all $n \geq N'$ and $k \in \{1,2, \ldots, N\}$. Let $n \geq N'$ and let $a \in C$. Then  there exists $k \in \{1,2,\ldots, N\}$ such that $\|a-a_k\|_\A<\varepsilon/3$. Thus,
\begin{align*}
\|E^{\tau_{\mathbf{v}^n}}(a)-E^{\tau_{\mathbf{v}^\infty}}(a)\|_\A & \leq \|E^{\tau_{\mathbf{v}^n}}(a)-E^{\tau_{\mathbf{v}^n}}(a_k)\|_\A+\|E^{\tau_{\mathbf{v}^n}}(a_k)-E^{\tau_{\mathbf{v}^\infty}}(a_k)\|_\A\\
& \quad \quad  +\|E^{\tau_{\mathbf{v}^\infty}}(a_k)-E^{\tau_{\mathbf{v}^\infty}}(a)\|_\A \\
& < \|E^{\tau_{\mathbf{v}^n}}(a-a_k)\|_\A+\frac{\varepsilon}{3}+\|E^{\tau_{\mathbf{v}^\infty}}(a-a_k)\|_\A\\
& \leq \|a-a_k\|_\A+\frac{\varepsilon}{3}+\|a-a_k\|_\A\\
&<\varepsilon,
\end{align*}
where  Definition \ref{d:cond-exp} is used in the penultimate inequality.

Hence, by the reverse triangle inequality and uniform continuity of the square root function,  $(\sqrt{\|E^{\tau_{\mathbf{v}^n}}(\cdot)\|_\A})_{n \in \N}$ converges to  $\sqrt{\|E^{\tau_{\mathbf{v}^\infty}}(\cdot)\|_\A}$ uniformly on any compact set of $(\A, \|\cdot\|_\A)$. As singletons are compact, we have pointwise convergence on $\A$.
\end{proof}

We now focus on the Effros--Shen algebras and begin with their definition found in \cite{Effros80b}. 
 Let $\theta \in \R$ be irrational. There exists a unique sequence of   integers $(r^\theta_n)_{n \in \N }$  with $r^\theta_n>0$ for all $n \in \N\setminus \{0\}$ such that 
 \[
 \theta =\lim_{n \to \infty} r_0^\theta +\cfrac{1}{r^\theta_1 + \cfrac{1}{r^\theta_2 + \cfrac{1}{r^\theta_3 +\cfrac{1}{\ddots+\cfrac{1}{r^\theta_n}}}}}.
 \]
 When $\theta \in (0,1)$, we have that $r^\theta_0=0$. The sequence $(r^\theta_n)_{n \in \N_0}$ is   the {\em continued fraction expansion of $\theta$} \cite{Hardy38}.

 Next, we  define the finite-dimensional  C*-subalgebras of the Effros--Shen algebras.
 For each $n \in \N$, define \[
p_0^\theta=r_0^\theta, \quad p_1^\theta=1 \quad \text{ and } \quad q_0^\theta=1, \quad q_1^\theta=r^\theta_1,
\]
and   set 
\[
p_{n+1}^\theta=r^\theta_{n+1} p_{n}^\theta+p_{n-1}^\theta
\]
and 
\[
q_{n+1}^\theta= r^\theta_{n+1} q_{n}^\theta+q_{n-1}^\theta.
\]
The sequence $\left(p_{n}^\theta/q_{n}^\theta\right)_{n \in \mathbb{N}_0}$ of {\em convergents} $p^\theta_n/q^\theta_n$ converges to $\theta$. In fact, for each $n \in \N$, 
\[
 \frac{p_n^\theta}{q_n^\theta}=r_0^\theta +\cfrac{1}{r^\theta_1 + \cfrac{1}{r^\theta_2 + \cfrac{1}{r^\theta_3 +\cfrac{1}{\ddots+\cfrac{1}{r^\theta_n}}}}}.
\]
 
 We now define the C*-algebras with which we endow  Frobenius--Rieffel norms. Let $\A_{\theta,0}=\mathbb{C}$ and, for each $n \in \mathbb{N}_0$, let
\[
\A_{\theta,n}=\M_{q_n^\theta}(\C) \oplus \M_{q_{n-1}^\theta}(\C).
\]
These form an inductive sequence with the   maps   
 \begin{equation}\label{eq:theta-alg}
\alpha_{\theta,n}:a\oplus b \in \A_{\theta,n} \mapsto   \diag\left(a, \ldots, a,b \right)  \oplus a \in \A_{\theta,n+1},
\end{equation}
where there are $r^\theta_{n+1}$ copies of $a$ on the diagonal in the first summand of $\A_{\theta,n+1}$. This   is a unital *-monomorphism by construction.  For $n=0$,
\[\alpha_{\theta, 0}: \lambda \in  \A_{\theta,0} \mapsto   \diag(\lambda, \ldots, \lambda)  \oplus \lambda\ \in \A_{\theta,1}.
\]
The {\em Effros--Shen algebra associated to $\theta$} is   the inductive limit (see \cite[Section 6.1]{Murphy90})
\[
\A_\theta=\underrightarrow{\lim} \ (\A_{\theta,n}, \alpha_{\theta,n})_{n \in \N}.
\]

There exists a unique faithful tracial state $\tau_\theta$ on $\A_\theta$ such that for each $n \in \N \setminus \{0\}$,  $\tau_{\theta,n}$ (see Expression \eqref{eq:fd-trace}) is defined for each $(a,b) \in \A_{\theta,n}$ by
\[
\tau_{\theta,n}(a,b)=t(\theta,n)\frac{1}{q_n^\theta}\mathrm{Tr}(a)+(1-t(\theta,n))\frac{1}{q_{n-1}^\theta}\mathrm{Tr}(b),
\]
where   \[t(\theta,n)=(-1)^{n-1}q_n^\theta  (\theta q_{n-1}^\theta -p_{n-1}^\theta ) \in (0,1)\]
 (see \cite[Lemma 5.5]{Aguilar-Latremoliere15}).
 
For each $n \in \N\setminus \{0\}$, define  
 \[
 \kappa_{\theta,n}=\sqrt{\frac{\theta q^\theta_n - p^\theta_n}{\left( \theta q^\theta_{n-2}-p^\theta_{n-2}\right)r^\theta_n (r^\theta_n+1)^2}}
 \]
 as in \cite[Theorem 5.2]{Aguilar-Garcia-Kim21} and let $\kappa_\theta=(\kappa_{\theta,n})_{n \in \N}$,
 \[
 \beta_\theta(n)=\frac{1}{\dim(\A_{\theta,n})}=\frac{1}{(q_n^\theta)^2+(q_{n-1}^\theta)^2},
 \]
 \[
 \mathcal{U}^\theta_n=\alpha^{(n)}_\theta(\A_{\theta,n})
 \]
 as in Expression \eqref{eq:ind-lim-maps}. For each $a \in \alpha^{(n)}_{\theta}(\A_{\theta,n})$,   let
 \begin{equation}\label{eq:fd-es-lip}
 \Lip_{\theta,n}(a)=\Lip^{\tau_\theta, \kappa_\theta}_{\mathcal{U}^\theta_n,\beta_\theta}(a)=\max_{m \in \{0,1,\ldots, n-1\}} \frac{\max\left\{\|a-E_m^{\tau_\theta}(a)\|_{E_m^{\tau_\theta}}, \|a^*-E_m^{\tau_\theta}(a^*)\|_{E_m^{\tau_\theta}}\right\}}{\kappa_{\theta,m}\beta_\theta(m)}
 \end{equation}
 as in Theorem \ref{t:main-af}.

  \begin{theorem}\label{t:es-fd-approx}
  If $\theta \in (0,1)\setminus \Q$, then there exists a strongly Leibniz L-seminorm $\Lip_\theta$ on $\A_\theta$ with $A=1,B=0, C=1$ such that 
  \begin{equation}\label{eq:es-fd-approx}
  \dpropinquity{SL} ((\A_{\theta}, \Lip_\theta), (\A_{\theta,n}, \Lip_{\theta,n})) \leq 4 \sum_{j=n}^\infty \beta_\theta (j)
  \end{equation}
  for all $n \in \N$, and thus 
  \[
  \lim_{n \to \infty} \dpropinquity{SL} ((\A_{\theta}, \Lip_\theta), (\A_{\theta,n}, \Lip_{\theta,n}))=0.
  \]
  \end{theorem}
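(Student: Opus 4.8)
The plan is to obtain Theorem \ref{t:es-fd-approx} as a direct application of Theorem \ref{t:main-af} to the Effros--Shen data $(\A_{\theta,n},\alpha_{\theta,n})_{n\in\N}$, so the real work consists entirely of checking that the hypotheses of Theorem \ref{t:main-af} hold for this particular inductive sequence, trace, and choice of constants. First I would record the structural facts already assembled above: $\A_\theta$ is a unital AF algebra, $\tau_\theta$ is a faithful tracial state, $\alpha^{(0)}_\theta(\A_{\theta,0})=\C1_{\A_\theta}$, and each $\alpha_{\theta,n}$ is a unital $*$-monomorphism between finite direct sums of matrix algebras. In particular the hypotheses of Proposition \ref{p:equiv-const} are met, and this is the tool I will use to transport equivalence constants from the finite-dimensional setting to the inductive limit.

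The crux is to verify the equivalence-constant hypothesis of Theorem \ref{t:main-af}, namely that $\kappa_{\theta,n}\,\norm{a}{\A_\theta}\leq\norm{a}{E_n^{\tau_\theta}}$ for all $a\in\alpha^{(n+1)}_\theta(\A_{\theta,n+1})$. Here I would invoke \cite[Theorem 5.2]{Aguilar-Garcia-Kim21}, which exhibits the explicit numbers $\kappa_{\theta,n}$ as valid equivalence constants for the finite-dimensional $\tau_{\theta,n+1}$-preserving conditional expectation $E^{\tau_{\theta,n+1}}_{n+1,n}:\A_{\theta,n+1}\to\alpha_{\theta,n}(\A_{\theta,n})$; that is, $\kappa_{\theta,n}\,\norm{a}{\A_{\theta,n+1}}\leq\norm{a}{E^{\tau_{\theta,n+1}}_{n+1,n}}$ for all $a\in\A_{\theta,n+1}$. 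Since $\tau_{\theta,n+1}=\tau_\theta\circ\alpha^{(n+1)}_\theta$ by construction, Proposition \ref{p:equiv-const} then promotes these constants to the required inequality on the limit, $\kappa_{\theta,n}\,\norm{a}{\A_\theta}\leq\norm{a}{E_n^{\tau_\theta}}$ on $\alpha^{(n+1)}_\theta(\A_{\theta,n+1})$. The care needed at this step is purely to match the conditional expectation and the trace normalization used in the cited theorem against those appearing in Proposition \ref{p:equiv-const}.

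It then remains to confirm that $\beta_\theta$ is summable. Since $q^\theta_{n+1}=r^\theta_{n+1}q^\theta_n+q^\theta_{n-1}\geq q^\theta_n+q^\theta_{n-1}$, the denominators $q^\theta_n$ dominate the Fibonacci numbers and hence grow at least geometrically, so $\beta_\theta(n)=1/((q_n^\theta)^2+(q_{n-1}^\theta)^2)$ decays geometrically and $\sum_{n}\beta_\theta(n)<\infty$. With all hypotheses of Theorem \ref{t:main-af} in place---the $\tau_\theta$-preserving faithful conditional expectations $E^{\tau_\theta}_n$, the summable $\beta_\theta$, the constants $\kappa_{\theta,n}$, and the definition of $\Lip_{\theta,n}$ matching \eqref{eq:fd-es-lip}---Theorem \ref{t:main-af} produces a strongly Leibniz L-seminorm $\Lip_\theta$ on $\A_\theta$ with $(A,B,C)=(1,0,1)$ making $(\A_\theta,\Lip_\theta)$ a quantum compact metric space and satisfying the estimate \eqref{eq:es-fd-approx}; letting $n\to\infty$ and using summability of $\beta_\theta$ yields the limit statement. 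I expect the main (though essentially routine) obstacle to be the careful cross-reference identification of the conditional expectations and traces across the three inputs, so that the explicit constants $\kappa_{\theta,n}$ of \cite[Theorem 5.2]{Aguilar-Garcia-Kim21} feed correctly into Proposition \ref{p:equiv-const} and thence into Theorem \ref{t:main-af}.
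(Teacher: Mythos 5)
Your proposal is correct and follows essentially the same route as the paper's proof: invoke \cite[Theorem 5.2]{Aguilar-Garcia-Kim21} for the finite-dimensional equivalence constants $\kappa_{\theta,n}$, promote them to the inductive limit via Proposition \ref{p:equiv-const}, and then apply Theorem \ref{t:main-af}. Your additional verification that $\beta_\theta$ is summable (via the Fibonacci-type growth of the $q_n^\theta$) is a detail the paper leaves implicit, but it is a genuine hypothesis of Theorem \ref{t:main-af}, so including it only makes the argument more complete.
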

  \begin{proof}
 Let $n \in \N$.  By \cite[Theorem 5.2]{Aguilar-Garcia-Kim21} and Proposition \ref{p:equiv-const}, 
  \[
  \kappa_{\theta,n} \|a\|_{\A_\theta} \leq \|a\|_{E^{\tau_\theta}_n}
  \]
  for all $ a \in \alpha^{(n+1)}_\theta (\A_{\theta,n+1})$. The proof is complete by Theorem \ref{t:main-af}.
  \end{proof}

We now prove our main result about the Effros--Shen algebras.

\begin{theorem}\label{t:main-es}
The map
\[
\theta \in (0,1) \setminus \Q \longmapsto (\A_\theta, \Lip_\theta),
\]
where $\Lip_\theta$ is defined in Expression \eqref{eq:es-fd-approx}, 
is continuous with respect to the usual topology on $(0,1) \setminus \Q $ and the topology induced by $\dpropinquity{SL}$.
\end{theorem}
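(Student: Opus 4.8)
The plan is to prove sequential continuity: I fix an irrational $\theta_0 \in (0,1)$, take irrationals $\theta_k \to \theta_0$, and show $\dpropinquity{SL}((\A_{\theta_k},\Lip_{\theta_k}),(\A_{\theta_0},\Lip_{\theta_0})) \to 0$. Since $\dpropinquity{SL}$ is a metric, this suffices. The backbone is a triangle inequality through the finite-dimensional approximants at a level $N$ to be chosen:
\begin{align*}
\dpropinquity{SL}((\A_{\theta_k},\Lip_{\theta_k}),(\A_{\theta_0},\Lip_{\theta_0}))
&\leq \dpropinquity{SL}((\A_{\theta_k},\Lip_{\theta_k}),(\A_{\theta_k,N},\Lip_{\theta_k,N})) \\
&\quad + \dpropinquity{SL}((\A_{\theta_k,N},\Lip_{\theta_k,N}),(\A_{\theta_0,N},\Lip_{\theta_0,N})) \\
&\quad + \dpropinquity{SL}((\A_{\theta_0,N},\Lip_{\theta_0,N}),(\A_{\theta_0},\Lip_{\theta_0})).
\end{align*}
I would control the first and third (tail) terms uniformly using Theorem \ref{t:es-fd-approx}: each is bounded by $4\sum_{j\geq N}\beta_\theta(j)$. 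Since $q_{n+1}^\theta = r^\theta_{n+1}q_n^\theta + q_{n-1}^\theta \geq q_n^\theta + q_{n-1}^\theta$, the denominators satisfy $q_n^\theta \geq F_n$ (the $n$-th Fibonacci number) for \emph{every} irrational $\theta$, so $\beta_\theta(j) \leq 1/F_j^2$ and $\sum_{j \geq N}\beta_\theta(j) \to 0$ as $N \to \infty$ uniformly in $\theta$. Hence I can fix $N$ so large that the first and third terms are each below $\varepsilon/3$, simultaneously for $\theta_0$ and for all $\theta_k$.

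The heart of the argument is the middle term, for this fixed $N$. First I would invoke the local constancy of the continued fraction expansion at the irrational $\theta_0$: for each $N$ there is $\delta_N>0$ so that $|\theta-\theta_0|<\delta_N$ forces $r^\theta_j = r^{\theta_0}_j$ for all $j \leq N$, whence $p_j^\theta = p_j^{\theta_0}$, $q_j^\theta = q_j^{\theta_0}$ and the connecting maps $\alpha_{\theta,j}$ coincide with $\alpha_{\theta_0,j}$ for $j<N$. Thus for $k$ large, $\A_{\theta_k,N}$ and $\A_{\theta_0,N}$ are the \emph{same} finite-dimensional C*-algebra $\F$ with identical inductive structure, and the middle term reduces to comparing the two L-seminorms $\Lip_{\theta_k,N}$ and $\Lip_{\theta_0,N}$ on $\F$. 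By Proposition \ref{p:equiv-const} these seminorms are built from finite-dimensional data: the trace weights $t(\theta,j)$ (which now depend continuously on $\theta$, being affine in $\theta$ with the $p_j,q_j$ fixed), the equivalence constants $\kappa_{\theta,j}$, and the now-constant scalings $\beta_\theta(j)$. As $\theta_k \to \theta_0$ the finite-dimensional traces converge coordinate-wise, so by Proposition \ref{p:cond-exp-conv} the Frobenius--Rieffel norms $\norm{\cdot}{E^{\tau_{\theta_k}}_m}$ converge uniformly on the (compact) unit ball of $\F$ to $\norm{\cdot}{E^{\tau_{\theta_0}}_m}$, while $\kappa_{\theta_k,j} \to \kappa_{\theta_0,j}$ by \cite[Theorem 5.2]{Aguilar-Garcia-Kim21}. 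As $\Lip_{\theta,N}$ is a maximum of finitely many such ratios, I conclude that $\Lip_{\theta_k,N} \to \Lip_{\theta_0,N}$ uniformly on the unit ball of $\F$; equivalently, the unit balls $\{\Lip_{\theta_k,N}\leq 1\}$ and $\{\Lip_{\theta_0,N}\leq 1\}$ become arbitrarily Hausdorff-close in the C*-norm, modulo $\R 1_\F$.

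To convert this convergence into a vanishing propinquity I would form, for small $\eta>0$, the seminorm $\TLip(a,b) = \max\{\Lip_{\theta_k,N}(a),\Lip_{\theta_0,N}(b),\tfrac1\eta\norm{a-b}{\F}\}$ on $\F\oplus\F$, which is $1$-strongly Leibniz by the proof of Lemma \ref{l:sl-tunnel} (whose strong-Leibniz verification uses only that the anchoring maps are $\ast$-morphisms, here the two identity maps $\F\to\F$). The Hausdorff-closeness of the two unit balls makes the coordinate projections quantum isometries onto $(\F,\Lip_{\theta_k,N})$ and $(\F,\Lip_{\theta_0,N})$, so that $(\F\oplus\F,\TLip,\mathrm{proj}_1,\mathrm{proj}_2)$ is a strongly Leibniz tunnel whose extent is controlled by $\eta$ together with the unit-ball Hausdorff distance, exactly as in \cite[Theorem 3.1]{Latremoliere14} and the continuity argument of \cite{Aguilar-Latremoliere15}. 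Letting $k\to\infty$ and then $\eta\to 0$ drives the middle term below $\varepsilon/3$, completing the estimate. I expect the main obstacle to be precisely this last step: verifying that the coordinate projections are genuine quantum isometries and pinning down the extent bound in terms of the unit-ball Hausdorff distance, since one must upgrade the uniform convergence of the seminorms into the two-sided approximate-lifting property required for the quantum isometry condition while checking that the strongly Leibniz bookkeeping survives. The remaining ingredients—uniform tail control, continued-fraction stability, and convergence of the Frobenius--Rieffel norms and constants—are comparatively routine given the results already established.
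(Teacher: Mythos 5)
Your proposal is correct and follows essentially the same route as the paper's proof: uniform tail control via the Fibonacci/golden-ratio lower bound on the $q_n^\theta$ together with Theorem \ref{t:es-fd-approx}, local constancy of the continued-fraction data to identify $\A_{\theta_k,N}$ with $\A_{\theta_0,N}$, convergence of the finite-dimensional L-seminorms via Proposition \ref{p:cond-exp-conv} and \cite[Theorem 5.2]{Aguilar-Garcia-Kim21}, and the same three-term triangle inequality. The only difference is one of packaging: where you explicitly rebuild the bridge-type tunnel $(\F\oplus\F,\TLip,\mathrm{proj}_1,\mathrm{proj}_2)$ to convert seminorm convergence into $\dpropinquity{SL}$-convergence, the paper simply invokes the proof of \cite[Lemma 5.13]{Aguilar-Latremoliere15} (with Lemma \ref{l:sl-tunnel} guaranteeing the tunnels there remain strongly Leibniz), which is precisely the construction you describe.
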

\begin{proof}
Let $(\theta(n))_{n \in \N}$ be a sequence in $(0,1)\setminus \Q$ that converges to $\theta(\infty) \in (0,1)\setminus \Q$. Let $\varepsilon>0$. By \cite{Hardy38},  $(1/q_m^{\theta})_{m \in \N}$ is square summable for all $\theta \in (0,1) \setminus \Q$. Moreover, if   $\Phi=\phi-1$, where $\phi$ is the golden ratio, then $q_m^{\theta(n)} \geq q_m^\Phi$ for all $m \in \N $ and $ n \in \N \cup \{\infty\}$. Choose $N_1 \in \N$ such that    \[4\sum_{j=N_1}^\infty \frac{1}{(q_j^\Phi)^2+(q_{j-1}^\Phi)^2}< \frac{\varepsilon}{3}.\]
Then  Theorem \ref{t:es-fd-approx} ensures that
\[
\dpropinquity{SL}((\A_{\theta(n)}, \Lip_{\theta(n)}), (\A_{\theta(n), N_1}, \Lip_{\theta(n),N_1})) \leq 4\sum_{j=N_1}^\infty \frac{1}{(q_j^\Phi)^2+(q_{j-1}^\Phi)^2}< \frac{\varepsilon}{3} 
\]
for all $n \in \N\cup\{\infty\}$.

 By \cite[Proposition 5.10]{Aguilar-Latremoliere15}, choose $N_2 \in \N$ such that 
 \[
 q_{N_1}^{\theta(n)}=q_{N_1}^{\theta(\infty)} \quad \text{ and } \quad  q_{N_1-1}^{\theta(n)}=q_{N_1-1}^{\theta(\infty)}
 \]
 for all $n \geq N_2$. Therefore,  $\A_{\theta(n), N_1}=\A_{\theta(\infty), N_1}$ for all $n \geq N_2$. By the proof of \cite[Lemma 5.12]{Aguilar-Latremoliere15} along with Proposition \ref{p:cond-exp-conv} and  \cite[Theorem 5.2]{Aguilar-Garcia-Kim21}, we have for all $a \in \A_{\theta(\infty), N_1}$  that 
 \[
 \lim_{m \to \infty} \Lip_{\theta(N_2+m),N_1}\circ \alpha_{\theta(N_2+m)}^{(N_1)} (a)=\Lip_{\theta(\infty),N_1}\circ \alpha_{\theta(\infty)}^{(N_1)}(a).
 \]
 Thus, by the same proof as \cite[Lemma 5.13]{Aguilar-Latremoliere15} 
 \[
 \lim_{n \to \infty} \dpropinquity{SL} ((\A_{\theta(n), N_1}, \Lip_{\theta(n), N_1}), (\A_{\theta(\infty), N_1}, \Lip_{\theta(\infty), N_1}))=0.
 \] 
 Therefore, we may choose $N_3 \geq N_2$ such that 
 \[
 \dpropinquity{SL} ((\A_{\theta(n), N_1}, \Lip_{\theta(n), N_1}), (\A_{\theta(\infty), N_1}, \Lip_{\theta(\infty), N_1}))<\frac{\varepsilon}{3}
 \]
 for all  $n \geq N_3.$  Hence, if $n \geq N_3$, then 
 \begin{align*}
&  \dpropinquity{SL} ((\A_{\theta(n)}, \Lip_{\theta(n)}),( \A_{\theta(\infty)}, \Lip_{\theta(\infty)}))\\
& \quad \leq  \dpropinquity{SL} ((\A_{\theta(n)}, \Lip_{\theta(n)}),( \A_{\theta(n),N_1}, \Lip_{\theta(n), N_1}))\\
& \quad \quad \quad  +\dpropinquity{SL} ((\A_{\theta(n), N_1}, \Lip_{\theta(n), N_1}),( \A_{\theta(\infty), N_1}, \Lip_{\theta(\infty), N_1}))\\
&\quad \quad \quad  +\dpropinquity{SL} ((\A_{\theta(\infty), N_1}, \Lip_{\theta(\infty),N_1}),( \A_{\theta(\infty)}, \Lip_{\theta(\infty)}))\\
& \quad <\frac{\varepsilon}{3}+\dpropinquity{SL} ((\A_{\theta(n), N_1}, \Lip_{\theta(n), N_1}),( \A_{\theta(\infty), N_1}, \Lip_{\theta(\infty), N_1}))+\frac{\varepsilon}{3}\\
& \quad <\frac{\varepsilon}{3}+\frac{\varepsilon}{3}+\frac{\varepsilon}{3} =\varepsilon
 \end{align*}
 by the triangle inequality.
 \end{proof}
 
 Thus, we see that the equivalence constants   found in \cite{Aguilar-Garcia-Kim21} were vital in this continuity result. There is nothing that guarantees that any equivalence constant would provide the same result. However,  Proposition \ref{p:sharp-cont} shows that we can also obtain continuity of the map in Theorem \ref{t:main-es} using the sharpest equivalence constants, which are guaranteed to exist for finite-dimensional spaces. Now, we do not know if the equivalence constants of \cite[Theorem 5.2]{Aguilar-Garcia-Kim21} are sharp, but  we chose to present the proof of Theorem \ref{t:main-es} using these constants since they provided continuity with   explicit L-seminorms rather than L-seminorms that are built using unknown sharp constants. Thus, the purpose of Proposition \ref{p:sharp-cont} is to show that if one cannot  calculate explicit equivalence constants that provide continuity, then at least, one can achieve continuity with the existence of sharp equivalence constants. First, we prove a lemma.
 
 \begin{lemma}\label{l:min-conv}
 Let $(C, d)$ be a compact metric space. Let $(f_n)_{n \in \N}$ be a sequence of real-valued continuous functions on $X$, and let $f:X \rightarrow \R$ be continuous. 
 
 If $(f_n)_{n \in \N}$ converges to $f$ uniformly, then   $(\min_{x \in C} f_n(x))_{n \in \N}$ converges to $\min_{x \in C} f(x)$ and $(\max_{x \in C} f_n(x))_{n \in \N}$ converges to $\max_{x \in C} f(x)$.
 \end{lemma}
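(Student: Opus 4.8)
The plan is to reduce the whole statement to the single observation that the ``maximum over $C$'' functional is $1$-Lipschitz with respect to the uniform norm. First I would record that, because each $f_n$ and $f$ are continuous on the \emph{compact} metric space $C$, the extreme value theorem guarantees that $\max_{x\in C} f_n(x)$ and $\max_{x\in C} f(x)$ are attained, so they are genuine real numbers; this is the only place compactness is used, and it is what makes the statement about maxima (rather than suprema) meaningful.

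The key step is then the two-sided estimate
\[
\left|\max_{x\in C} f_n(x) - \max_{x\in C} f(x)\right| \leq \sup_{x\in C} |f_n(x) - f(x)|.
\]
To obtain it, I would note that for every $x\in C$ we have $f_n(x) \leq f(x) + \sup_{y\in C}|f_n(y)-f(y)|$, so taking the maximum over $x$ yields $\max_{x\in C} f_n(x) \leq \max_{x\in C} f(x) + \sup_{y\in C}|f_n(y)-f(y)|$. Interchanging the roles of $f_n$ and $f$ gives the reverse inequality, and the two together produce the displayed bound.

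The conclusion for the maximum is then immediate: uniform convergence of $(f_n)_{n\in\N}$ to $f$ means precisely that $\sup_{x\in C}|f_n(x)-f(x)| \to 0$, so the right-hand side of the estimate tends to $0$ and hence $\max_{x\in C} f_n(x) \to \max_{x\in C} f(x)$. For the minimum I would avoid repeating the argument by applying the maximum case to the sequence $(-f_n)_{n\in\N}$ and the limit $-f$, which still converges uniformly, and using the identity $\min_{x\in C} g(x) = -\max_{x\in C}\bigl(-g(x)\bigr)$.

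There is no genuine obstacle in this lemma; it is an elementary consequence of uniform convergence. The only points demanding a line of care are the appeal to compactness so that the extrema exist and the symmetric bookkeeping in the Lipschitz estimate. (I also read the $X$ appearing in the hypotheses as the compact space $C$ over which the extrema are taken, since the two are plainly meant to be identical.)
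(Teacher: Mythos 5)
Your proof is correct and follows essentially the same route as the paper's: both pass a uniform two-sided bound $f - \varepsilon \le f_n \le f + \varepsilon$ through the extremum, the paper via a direct $\varepsilon$--$N$ argument and you via the equivalent $1$-Lipschitz estimate $\left|\max_C f_n - \max_C f\right| \le \sup_C |f_n - f|$. Your derivation of the minimum case from the maximum case using $\min_C g = -\max_C(-g)$ is a minor stylistic economy over the paper's ``a similar argument,'' not a different method.
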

 \begin{proof}
 Since  $C$ is compact, $\inf_{x \in C} f(x)=\min_{x \in C} f(x)=\min f$ and $\inf_{x \in C} f_n(x)=\min_{x \in C} f_n(x)=\min f_n$ for all $n \in \N$.
Let $\varepsilon>0$. There exists an $N \in \mathbb{N}$ such that for $n\geq N$, we have $|f_n(x) -f(x)| < \varepsilon/2$ for all $x \in C$. Then for $n\geq N$,   $$f(x) - \varepsilon/2 < f_n(x) < f(x) +\varepsilon/2$$ for all $x \in C$. We take the infimum of this inequality to obtain
$$\min f - \varepsilon/2 \leq \min f_n \leq \min f +\varepsilon/2,$$ 
which implies $|\min f-\min f_n| \leq \varepsilon/2 < \varepsilon$.

 A similar argument establishes the result for $\min $ replaced with $\max$.
 \end{proof}
 
 \begin{proposition}\label{p:sharp-cont}
 Let $(V, \|\cdot\|)$ be a finite-dimensional normed vector space. Let \\  $(\|\cdot\|_n)_{n \in \N}$ be a sequence of norms on $V$ converging uniformly on the unit ball of  $(V, \|\cdot\|)$ to a norm $\|\cdot\|_\infty$ on $V$.  
 
 If for each $n \in \N \cup \{\infty \}$ we have 
 \[
 \kappa_n \|\cdot\| \leq \|\cdot\|_n \leq \lambda_n \|\cdot\|
 \] where $\kappa_n>0, \lambda_n >0$ are sharp, then 
 $(\kappa_n)_{n \in \N}$ converges to $\kappa_\infty$ and $ (\lambda_n)_{n \in \N}$ converges to $\lambda_\infty$.
 \end{proposition}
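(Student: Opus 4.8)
The plan is to identify each sharp constant with an extremum of the corresponding norm over the unit sphere $S = \{v \in V : \|v\| = 1\}$ of $(V,\|\cdot\|)$, and then to invoke Lemma \ref{l:min-conv} with $C = S$.

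First I would observe that, by positive homogeneity of norms, the sharpness of the constants means precisely that
\[
\kappa_n = \min_{v \in S} \|v\|_n \qquad \text{and} \qquad \lambda_n = \max_{v \in S} \|v\|_n
\]
for every $n \in \N \cup \{\infty\}$. Indeed, the sharp constants are $\kappa_n = \inf_{v \neq 0} \|v\|_n/\|v\|$ and $\lambda_n = \sup_{v \neq 0} \|v\|_n/\|v\|$, and homogeneity rewrites each quotient $\|v\|_n/\|v\|$ as $\|v/\|v\|\,\|_n$, reducing the infimum and supremum to those of $v \mapsto \|v\|_n$ over $S$. Since $V$ is finite-dimensional, $S$ is compact (closed and bounded for $\|\cdot\|$) and each $\|\cdot\|_n$ is continuous, so these extrema are attained and the displayed formulas hold.

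Next I would check that Lemma \ref{l:min-conv} applies to the functions $v \mapsto \|v\|_n$ and $v \mapsto \|v\|_\infty$ on $S$. Each norm is continuous on the finite-dimensional space $(V,\|\cdot\|)$, hence continuous on the compact set $S$; and because $S$ is contained in the unit ball of $(V,\|\cdot\|)$, the assumed uniform convergence of $(\|\cdot\|_n)_{n \in \N}$ to $\|\cdot\|_\infty$ on that ball restricts to uniform convergence on $S$. Lemma \ref{l:min-conv} then gives $\min_{v \in S}\|v\|_n \to \min_{v \in S}\|v\|_\infty$ and $\max_{v \in S}\|v\|_n \to \max_{v \in S}\|v\|_\infty$, which by the previous display is exactly the assertion that $(\kappa_n)_{n \in \N}$ converges to $\kappa_\infty$ and $(\lambda_n)_{n \in \N}$ converges to $\lambda_\infty$.

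I expect no genuine obstacle here; the single point meriting care is the reduction of the sharp constants to extrema over $S$, where compactness of the sphere (guaranteeing that the extrema are attained) is precisely what allows Lemma \ref{l:min-conv}, which is phrased for continuous functions on a compact metric space, to be applied verbatim.
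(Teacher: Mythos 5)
Your proof is correct and follows essentially the same route as the paper: identify each sharp constant with the minimum (respectively maximum) of the corresponding norm over the unit sphere of $(V,\|\cdot\|)$, use finite dimensionality for compactness of the sphere and continuity of the norms, and apply Lemma \ref{l:min-conv}. Your write-up is in fact slightly more explicit than the paper's on two points it leaves implicit: the homogeneity argument reducing the sharp constants to extrema over the sphere, and the observation that uniform convergence on the unit ball restricts to the sphere.
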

 \begin{proof}
 Let $n \in \N\cup \{\infty\}$. First, note that   $a \in V \mapsto \|a\|_n$ is continuous with respect to $\|\cdot\|$ since the norms are equivalent. Thus, since $\{a \in V:   \|a\|=1\}$ is compact by finite dimensionality,   $  \{\|a\|_n \in \R : a \in V, \|a\|=1\}$ is compact. Hence \[\inf  \{\|a\|_n \in \R : a \in V, \|a\|=1\}=\min  \{\|a\|_n \in \R : a \in V, \|a\|=1\}>0\] as $\|a\|=1$ implies that $a \neq 0$ and thus $\|a\|_n>0$.  Since $\kappa_n$ is sharp, 
 \[
 \kappa_n=\min  \{\|a\|_n \in \R : a \in V, \|a\|=1\}.
 \]
 Therefore, by Lemma \ref{l:min-conv},   $(\kappa_n)_{n \in \N}$ converges to $\kappa_\infty$ since the unit sphere of $(V, \|\cdot\|)$ is compact by finite dimensionality. The remaining result follows similarly.
 \end{proof}
 Thus, combining this result with Proposition \ref{p:cond-exp-conv}, we also have a proof of Theorem \ref{t:main-es} using the sharp constants for $\kappa^\theta_n$ in Expression \ref{eq:fd-es-lip} rather than the explicit ones of \cite[Theorem 5.2]{Aguilar-Garcia-Kim21}.
 
 For our final result, we present convergence of UHF algebras with respect to convergence of  their multiplicity sequences. Unlike the Effros--Shen case,  where the continuity result relied on continuity of the equivalence constants in some appropriate sense, convergence in  UHF algebras occurs regardless of which equivalence constants are chosen. First, we detail the metric space that we use to describe convergence of the multiplicity sequences and the standard construction of the class of UHF algebras.

\begin{definition}\label{Baire-Space-def}
The \emph{Baire space} $\BaireSpace$ is the set $(\N\setminus\{0\})^\N$ endowed with the metric $\mathsf{d}$ defined, for any two $(x(n))_{n\in\N}$, $(y(n))_{n\in\N}$ in $\BaireSpace$, by 
\begin{equation*}
 d_\BaireSpace\left((x(n))_{n\in\N}, (y(n))_{n\in\N}\right) = \begin{cases}
0  \ \ \ \ \text{ if $x(n) = y(n)$ for all $n\in\N$},\\ \\
2^{-\min\left\{ n \in \N : x(n) \not= y(n) \right\}} \ \ \  \text{ otherwise}\text{.}
\end{cases}
\end{equation*}
\end{definition}

Next, we define UHF algebras in a way that  suits  our needs. Given    $(\beta(n))_{n\in\N} \in \BaireSpace$, let 
\[
\boxtimes \beta(n)=\begin{cases}
1 & \text{ if } n=0,\\
\prod_{j=0}^{n-1} (\beta(j)+1) & \text{ otherwise}.
\end{cases}
\]

For each $n \in \N$, define a unital *-monomorphism by
\[
\mu_{\beta,n} : a \in \M_{\boxtimes \beta(n)}(\C) \longmapsto \mathrm{diag}(a,a,\ldots, a) \in \M_{\boxtimes \beta(n+1)}(\C),
\]
where there are $\beta(n)+1$ copies of $a$ in $\mathrm{diag}(a,a,\ldots,a)$. Set $\mathsf{uhf}((\beta(n))_{n\in\N})=\underrightarrow{\lim} \ (\M_{\boxtimes \beta(n)}(\C) , \mu_{\beta,n})_{n \in \N}$. The map
\[
(\beta(n))_{n\in\N} \in \BaireSpace \longmapsto \mathsf{uhf}((\beta(n))_{n\in\N})
\]
is a surjection onto the class of all UHF algebras up to *-isomorphism by \cite[Chapter III.5]{Davidson}.

For each $n \in \N$, let 
\[
\gamma_\beta(n)=\frac{1}{\dim(\M_{\boxtimes \beta(n)}(\C))},
\]
and let \[\rho_\beta\] be the unique faithful tracial state on $uhf((\beta(n))_{n\in\N})$, and set
\[
\mathcal{V}^\beta_n = \mu^{(n)}_\beta (\M_{\boxtimes \beta(n)}(\C)) 
\]
as in Expression \eqref{eq:ind-lim-maps}.

Next, let $\rho_{\beta,n+1}$ denote the unique faithful tracial state on $ \M_{\boxtimes \beta(n+1)}(\C)$. Fix $\lambda^\beta_n>0$ such that 
\begin{equation}\label{eq:free-equiv-const}
\lambda^\beta_n \|a\|_{\M_{\boxtimes \beta(n+1)}(\C)} \leq \|a\|_{E^{\rho_{\beta,n}}}
\end{equation}
for all $a \in  \M_{\boxtimes \beta(n+1)}(\C)$, 
where $E^{\rho_{\beta,n}}: \M_{\boxtimes \beta(n+1)}(\C) \rightarrow \mu_{\beta,n}(\M_{\boxtimes \beta(n)}(\C))$ is the unique faithful  $\rho_{\beta,n+1}$-preserving conditional expectation onto $\mu_{\beta,n}(\M_{\boxtimes \beta(n)}(\C))$. Here we note that $\lambda^\beta_n$ is neither explicit nor necessarily the sharp constant  and we assume that $\lambda^\beta_n$ only depends on $\M_{\boxtimes \beta(n+1)}(\C)$, which is allowed since   $\rho_{\beta,n+1}$ is the unique faithful tracial state on $\M_{\boxtimes \beta(n+1)}(\C)$. Let $\lambda^\beta=(\lambda^\beta_n)_{n \in \N}$.

 For each $a \in \mu^{(n)}_{\beta}(\M_{\boxtimes \beta(n)}(\C))$, let 
 \begin{equation}\label{eq:fd-uhf-lip}
 \Lip^\BaireSpace_{\beta,n}(a)=\Lip^{\rho_\beta, \lambda}_{\mathcal{V}^\beta_n,\gamma_\beta}(a)
 \end{equation}
 as in  Theorem \ref{t:main-af}.

\begin{theorem}\label{t:main-uhf}
The map
\[
\beta \in \BaireSpace \longmapsto (\mathsf{uhf}((\beta(n))_{n \in \N}),  \Lip^\BaireSpace_{\beta}),
\]
where $\Lip^\BaireSpace_{\beta}$ is defined in Theorem \ref{t:main-af} using the L-seminorms defined in Expression \eqref{eq:fd-uhf-lip}, 
is continuous with respect to the Baire space and the topology induced by $\dpropinquity{SL}$.
\end{theorem}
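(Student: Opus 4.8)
The plan is to follow the argument for Theorem \ref{t:main-es}, while exploiting two features of the UHF setting that considerably simplify it. Fix a sequence $(\beta_k)_{k \in \N}$ in $\BaireSpace$ converging to $\beta_\infty \in \BaireSpace$, and fix $\varepsilon > 0$. First I would record a \emph{uniform} tail bound: since every coordinate of a point of $\BaireSpace$ lies in $\N \setminus \{0\}$, we have $\boxtimes\beta(n) = \prod_{j=0}^{n-1}(\beta(j)+1) \geq 2^n$, whence $\gamma_\beta(n) = (\boxtimes\beta(n))^{-2} \leq 4^{-n}$ for every $\beta \in \BaireSpace$. Thus $4\sum_{j=N}^\infty \gamma_\beta(j) \leq 4\sum_{j=N}^\infty 4^{-j} = \tfrac{16}{3}\,4^{-N}$ is bounded independently of $\beta$, so I may pick $N_1 \in \N$ with $\tfrac{16}{3}\,4^{-N_1} < \varepsilon/3$. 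Theorem \ref{t:main-af}, applied to the UHF data (whose hypotheses hold, with $\mathcal{V}^\beta_0 = \C 1$, the unique faithful trace $\rho_\beta$, and the equivalence constants $\lambda^\beta$ lifted through Proposition \ref{p:equiv-const}), then yields
\[
\dpropinquity{SL}\bigl((\mathcal{V}^\beta_{N_1}, \Lip^\BaireSpace_{\beta, N_1}), (\mathsf{uhf}(\beta), \Lip^\BaireSpace_\beta)\bigr) \leq 4\sum_{j=N_1}^\infty \gamma_\beta(j) < \frac{\varepsilon}{3}
\]
for \emph{every} $\beta \in \BaireSpace$, in particular for $\beta_k$ and for $\beta_\infty$.

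The decisive observation is that the finite-dimensional quantum compact metric space $(\mathcal{V}^\beta_{N_1}, \Lip^\BaireSpace_{\beta, N_1})$ depends only on the initial segment $\beta(0), \ldots, \beta(N_1 - 1)$. Indeed, the dimension $\boxtimes\beta(N_1)$, the connecting embeddings $\mu_{\beta,0}, \ldots, \mu_{\beta, N_1 - 1}$, the \emph{unique} normalized traces on the matrix algebras involved and hence the conditional expectations $E^{\rho_\beta}_m$ restricted to $\mathcal{V}^\beta_{N_1}$, the associated Frobenius--Rieffel norms, the constants $\lambda^\beta_m$ (which by hypothesis depend only on $\M_{\boxtimes\beta(m+1)}(\C)$), and the weights $\gamma_\beta(m)$, all for $m \in \{0, \ldots, N_1 - 1\}$, are each determined by $\beta(0), \ldots, \beta(N_1 - 1)$. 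Using the Baire metric I would choose $K \in \N$ so that $d_\BaireSpace(\beta_k, \beta_\infty) < 2^{-(N_1 - 1)}$ for all $k \geq K$; by the definition of $d_\BaireSpace$ this forces $\beta_k(n) = \beta_\infty(n)$ for all $n \leq N_1 - 1$. For such $k$, the spaces $(\mathcal{V}^{\beta_k}_{N_1}, \Lip^\BaireSpace_{\beta_k, N_1})$ and $(\mathcal{V}^{\beta_\infty}_{N_1}, \Lip^\BaireSpace_{\beta_\infty, N_1})$ coincide up to full quantum isometry, so their $\dpropinquity{SL}$-distance is exactly $0$.

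Finally I would assemble these via the triangle inequality for $\dpropinquity{SL}$: for $k \geq K$,
\begin{align*}
\dpropinquity{SL}&\bigl((\mathsf{uhf}(\beta_k), \Lip^\BaireSpace_{\beta_k}), (\mathsf{uhf}(\beta_\infty), \Lip^\BaireSpace_{\beta_\infty})\bigr) \\
&\leq \dpropinquity{SL}\bigl((\mathsf{uhf}(\beta_k), \Lip^\BaireSpace_{\beta_k}), (\mathcal{V}^{\beta_k}_{N_1}, \Lip^\BaireSpace_{\beta_k, N_1})\bigr) \\
&\quad + \dpropinquity{SL}\bigl((\mathcal{V}^{\beta_k}_{N_1}, \Lip^\BaireSpace_{\beta_k, N_1}), (\mathcal{V}^{\beta_\infty}_{N_1}, \Lip^\BaireSpace_{\beta_\infty, N_1})\bigr) \\
&\quad + \dpropinquity{SL}\bigl((\mathcal{V}^{\beta_\infty}_{N_1}, \Lip^\BaireSpace_{\beta_\infty, N_1}), (\mathsf{uhf}(\beta_\infty), \Lip^\BaireSpace_{\beta_\infty})\bigr) \\
&< \frac{\varepsilon}{3} + 0 + \frac{\varepsilon}{3} < \varepsilon,
\end{align*}
which establishes the asserted continuity. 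In contrast to the Effros--Shen case, there is essentially no analytic obstacle here: because the UHF data up to level $N_1$ stabilizes \emph{exactly} — rather than merely approximately — once the multiplicity sequences agree on a long enough initial segment, the middle term vanishes identically, so neither the norm convergence of Proposition \ref{p:cond-exp-conv} nor any continuity of the equivalence constants is needed. The one point deserving care is the bookkeeping of the second paragraph: verifying that every ingredient of $(\mathcal{V}^\beta_{N_1}, \Lip^\BaireSpace_{\beta, N_1})$ — most notably the conditional expectations and the possibly non-sharp constants $\lambda^\beta_m$ — is a function of the finite initial segment $\beta(0), \ldots, \beta(N_1 - 1)$ alone, which is precisely what the hypothesis on $\lambda^\beta_n$ secures.
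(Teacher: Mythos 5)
Your proposal is correct and takes essentially the same approach as the paper: the paper's proof defers the skeleton (uniform tail estimate from Theorem \ref{t:main-af} plus the triangle inequality) to the proof of \cite[Theorem 4.9]{Aguilar-Latremoliere15}, and its only explicit content is exactly your key observation---that once $d_\BaireSpace(\beta,\eta) < 2^{-n}$, the traces, matrix algebras, embeddings, and conditional expectations up to level $n$ coincide, so the equivalence constants $\lambda^\beta_k$ and hence the finite-dimensional quantum metric spaces agree exactly, making the middle propinquity term vanish. Your write-up simply fills in the steps the paper handles by citation.
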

\begin{proof} The majority of this proof is complete by the proof of 
  \cite[Theorem 4.9]{Aguilar-Latremoliere15}. All that remains is continuity of the equivalence constants, but this follows similarly as the proof of \cite[Theorem 4.9]{Aguilar-Latremoliere15}.  Indeed,  if $d_\BaireSpace (\beta, \eta) <\frac{1}{2^n}$, then for all $k \leq n,$ we have  $\lambda^\beta_k=\lambda^\eta_k$ since 
\begin{enumerate}
\setlength\itemsep{0.4em}
\item     $\rho_{\beta,k+1}=\rho_{\eta,k+1}$, 
\item $ \M_{\boxtimes \beta(k+1)}(\C)= \M_{\boxtimes \eta(k+1)}(\C)$, 
\item $\mu_{\beta,k}(\M_{\boxtimes \beta(k)}(\C))=\mu_{\eta,k}(\M_{\boxtimes \eta(k)}(\C))$, and
\item $E^{\rho_{\beta,k}}=E^{\rho_{\eta,k}}$,
\end{enumerate}which is all the information required to fix $\lambda^\beta_k$ and $\lambda^\eta_k$. 
\end{proof}

\bibliographystyle{amsplain}
\bibliography{thesis}
\vfill

\end{document}